\numberwithin{equation}{section}
\numberwithin{figure}{section}
\theoremstyle{plain}
\newtheorem{thm}{\protect\theoremname}
  \theoremstyle{remark}
  \newtheorem{rem}[thm]{\protect\remarkname}
  \theoremstyle{plain}
  \newtheorem*{thm*}{\protect\theoremname}
  \theoremstyle{plain}
  \newtheorem{prop}[thm]{\protect\propositionname}
  \theoremstyle{plain}
  \newtheorem{lem}[thm]{\protect\lemmaname}
\providecommand{\lemmaname}{Lemma}
  \providecommand{\propositionname}{Proposition}
  \providecommand{\remarkname}{Remark}
  \providecommand{\theoremname}{Theorem}
\providecommand{\theoremname}{Theorem}
\begin{document}

\title{The scaling limit of critical Ising interfaces is $\mathrm{CLE}_{3}$}

\author{St\'ephane Benoist and Cl\'ement Hongler}
\begin{abstract}
In this paper, we consider the set of interfaces between $+$ and
$-$ spins arising for the critical planar Ising model on a domain
with $+$ boundary conditions, and show that it converges towards nested
CLE$_{3}$.

Our proof relies on the study of the coupling between the Ising model
and its random cluster (FK) representation, and of the interactions
between FK and Ising interfaces. The main idea is to construct an
exploration process starting from the boundary of the domain, to discover
the Ising loops and to establish its convergence to a conformally
invariant limit. The challenge is that Ising loops do not touch the
boundary; we use the fact that FK loops touch the boundary (and hence
can be explored from the boundary) and that Ising loops in turn touch
FK loops, to construct a recursive exploration process that visits
all the macroscopic loops.

A key ingredient in the proof is the convergence of Ising free arcs
to the Free Arc Ensemble (FAE), established in \cite{benoist-duminil-hongler}.
Qualitative estimates about the Ising interfaces then allow one to
identify the scaling limit of Ising loops as a conformally invariant
collection of simple, disjoint $\mathrm{SLE}_{3}$-like loops and
thus by the Markovian characterization of \cite{sheffield-werner}
as a $\mathrm{CLE}_{3}$.

A technical point of independent interest contained in this paper
is an investigation of double points of interfaces in the scaling
limit of critical FK-Ising. It relies on the technology of \cite{kemppainen-smirnov}. 
\end{abstract}

\address{Department of Mathematics, Columbia University. 2990 Broadway, New
York, NY 10027, USA.}

\email{sbenoist@math.columbia.edu}

\address{Chair of Statistical Field Theory, MATHAA Institute, Ecole Polytechnique
F\'ed\'erale de Lausanne, Station 8, 1015 Lausanne, Switzerland. }

\email{clement.hongler@epfl.ch}

\maketitle

\section{Introduction}

\subsection{Schramm-Loewner Evolution}

The introduction of Schramm's SLE curves \cite{schramm-i} opened
the road to decisive progress towards the understanding of 2D statistical
mechanics. The $\left(\mathrm{SLE}_{\kappa}\right)_{\kappa>0}$ form
a one-parameter family of conformally invariant random curves that
are the natural candidates for the scaling limits of interfaces found
in critical lattice models, as shown by Schramm's principle \cite{schramm-i}:
if a random curve is conformally invariant and satisfies the domain
Markov property, then it must be an $\mathrm{SLE}_{\kappa}$ for some
$\kappa>0$. The convergence of lattice model curves to SLE has been
established in a number of cases, in particular for the loop-erased
random walk ($\kappa=2$) and the uniform spanning tree ($\kappa=8$)
\cite{lawler-schramm-werner-ii}, percolation ($\kappa=6$) \cite{smirnov-iv},
the Ising model ($\kappa=3$) and FK-Ising ($\kappa=16/3$) \cite{chelkak-duminil-hongler-kemppainen-smirnov},
and the discrete Gaussian free field $\left(\kappa=4\right)$ \cite{schramm-sheffield}.

The development of SLE has had rich ramifications, in particular the
introduction of the Conformal Loop Ensembles (CLE) \cite{sheffield}.
The $\left(\mathrm{CLE}_{\kappa}\right)_{\kappa\in(\frac{8}{3},8]}$
are conformally invariant collections of $\mathrm{SLE}_{\kappa}$-like
random loops; they conjecturally describe the full set (rather than
a fixed marked set) of macroscopic interfaces appearing in discrete
models. For percolation, the convergence of the full set of interfaces
to $\mathrm{CLE}_{6}$ has been established \cite{camia-newman-ii}.
For the Gaussian free field, the connection with $\mathrm{CLE}_{4}$
is established in \cite{miller-sheffield,aru-sepulveda-werner}. For
the random-cluster (FK) representation of the Ising model, the convergence
of boundary-touching interfaces to a subset of CLE$_{16/3}$ has been
established in \cite{kemppainen-smirnov-ii}. This paper shows convergence
of Ising interfaces to CLE$_{3}$, and this is the first convergence
result in the non boundary touching regime $\kappa\leq4$.

\subsection{Ising Interfaces and SLE}

The Ising model is the most classical model of equilibrium statistical
mechanics. It consists of random configurations of $\pm1$ spins on
the vertices of a finite graph $\mathcal{G}$, which interact with their
neighbors: the probability of a spin configuration $\left(\sigma_{x}\right)_{x\in V}$
is proportional to $\exp\left(-\beta H\left(\sigma\right)\right)$,
where the energy $H\left(\sigma\right)$ is given by $-\sum_{x\sim y}\sigma_{x}\sigma_{y}$
and $\beta$ is a positive parameter called the inverse temperature.

The two-dimensional Ising model (i.e. when $\mathcal{G}\subset\mathbb{Z}^{2}$)
has been the subject of intense mathematical and physical investigations.
A phase transition occurs at the critical value $\beta_{c}=\frac{1}{2}\ln\left(\sqrt{2}+1\right)$:
for $\beta<\beta_{c}$ the spins are disordered at large distances,
while for $\beta>\beta_{c}$ a long range order is present. Thanks
to the exact solvability of the model, much is known about the phase
transition of the model; the recent years have in particular seen
important progress towards understanding rigorously the scaling limit
of the fields \cite{hongler, hongler-smirnov, chelkak-hongler-izyurov, hongler-kytola-viklund} and the interfaces \cite{smirnov-i, chelkak-duminil-hongler-kemppainen-smirnov, benoist-duminil-hongler} of the model at 
the critical temperature $\beta_{c}$.

For the two-dimensional Ising model, we call spin interfaces the curves
that separate the $+$ and $-$ spins of the model (as a technical aside, one needs to make choices when trying to follow an Ising interface on the square lattice, however these discrete choices are irrelevant in the scaling limit). In the case of
Dobrushin boundary conditions (i.e. $+$ spins on a boundary arc and
$-$ spins on the boundary complement) the resulting distinguished
spin interface linking boundary points can be shown to converge to
$\mathrm{SLE}_{3}$ \cite{chelkak-duminil-hongler-kemppainen-smirnov},
using the discrete complex analysis of lattice fermions \cite{smirnov-ii,chelkak-smirnov-ii}.
In the case of more general boundary conditions (in particular free
ones), one obtains convergence to variants of $\mathrm{SLE}_{3}$,
as was established in \cite{hongler-kytola,izyurov}.

Another natural class of random curves are the interfaces of random-cluster
representation of the model (which separate `wired' from `free' regions
in the domain). Following the introduction of the fermionic observables
\cite{smirnov-ii}, it was shown that the Dobrushin random-cluster
interfaces converge to chordal $\mathrm{SLE}_{16/3}$.

The study of more general collections of interfaces for the Ising
model and its FK representation has seen recent progress. With free
boundary conditions, the scaling limit of the interface arcs was obtained
\cite{benoist-duminil-hongler}: by taking advantage of the fact that
such arcs touch the boundary, an exploration tree is constructed,
made of a bouncing and branching version of the dipolar $\mathrm{SLE}_{3}$
process. 

For the FK representation of the Ising model, an exploration tree is constructed in \cite{kemppainen-smirnov-ii},
and this tree allows one to represent the random-cluster loops that
touch the boundary in terms of a branching $\mathrm{SLE}_{16/3}$. 
More recently, the convergence of the full set of these random-cluster 
interfaces to $\mathrm{CLE}_{16/3}$ has been shown in \cite{kemppainen-smirnov-iii}. 

\subsection{Ising Model and CLE}

The purpose of this paper is to rigorously describe the full scaling
limit of the Ising loops that arise in a domain with $+$ boundary
conditions: 
\begin{thm}
\label{thm:main}Consider the critical Ising model on a discretization
$\left(\Omega_{\delta}\right)_{\delta>0}$ of a (simply-connected)
Jordan domain $\Omega$, with $+$ boundary conditions. Then the set
of the interfaces between $+$ and $-$ spins converges in law to a
nested $\mathrm{CLE}_{3}$ as $\delta\to0$, with respect to the metric
$d_{\mathcal{X}}$ on the space of loop collections. 
\end{thm}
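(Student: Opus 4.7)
The plan is to construct a recursive boundary-to-bulk exploration of all macroscopic Ising loops and to identify its limit via the Markovian characterization of Sheffield-Werner. The basic difficulty is that with $+$ boundary conditions the spin interfaces do not touch $\partial\Omega$, so one cannot start an exploration tree on the boundary as in the Dobrushin or free cases. The way around is the Edwards-Sokal coupling: the $+$-Ising law on $\Omega_\delta$ is coupled to the wired FK-Ising measure, whose interfaces \emph{do} touch the boundary, and every Ising loop is contained in some FK loop and must touch its bounding FK interface from inside (since two FK-adjacent clusters carry independent uniform signs).

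First I would invoke the Kemppainen-Smirnov branching exploration for wired FK-Ising to obtain, in the scaling limit, the family of outermost FK loops as a conformally invariant collection built from branching $\mathrm{SLE}_{16/3}$'s. Conditionally on such a limiting FK loop $\gamma$ and on the $\pm$ sign of its cluster, the configuration inside $\gamma$ is an independent Ising model whose boundary conditions are of free type along $\gamma$; the Free Arc Ensemble convergence of \cite{benoist-duminil-hongler} then gives the scaling limit of the family of outermost Ising arcs joining contact points of $\gamma$. Attaching the appropriate $\pm$ signs to the complementary components produces the outermost Ising loops inside $\gamma$. Iterating: inside each such Ising loop $\ell$ the interior configuration is again monochromatic-boundary Ising on the Jordan domain $\mathrm{int}(\ell)$, so the same two-step procedure (FK exploration, then FAE inside each FK loop) produces the next generation of loops, and so on. To identify the limit with nested CLE$_3$ I would verify, on the outermost loop ensemble, the hypotheses of Sheffield-Werner: conformal invariance is inherited from conformal invariance of the FK limit and of the FAE; the renewal/conformal Markov property comes from the independence of the interior configuration given any loop combined with the recursive structure of the construction; individual loops are SLE$_3$-like because inside a single FK cluster one is back to the Chelkak-Duminil-Hongler-Kemppainen-Smirnov chordal convergence; and simplicity and disjointness of distinct loops follow from standard Ising RSW and one-arm bounds.

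The main obstacle will be a careful analysis of the interaction between Ising loops and the FK loops that host them, which is exactly the technical topic flagged in the abstract as an investigation of double points of the scaling limit of critical FK-Ising. An Ising loop typically touches its surrounding FK curve at many points, corresponding to double points of an $\mathrm{SLE}_{16/3}$-like loop, and one needs (i) joint tightness of the FK and Ising interface pictures in the loop-collection topology $d_{\mathcal{X}}$, (ii) a quantitative estimate showing that only finitely many macroscopic Ising loops are attached to each FK loop (so that the set of macroscopic contact points is finite at any positive scale), and (iii) a `no missed loop' statement ensuring that every macroscopic Ising loop is eventually discovered in finitely many generations of the recursion. Items (i)-(iii) rely on Kemppainen-Smirnov-type conditions together with RSW crossing estimates; the double-point analysis is the delicate ingredient, since it requires ruling out accumulation of macroscopic attachment points while still allowing the generically fractal set of microscopic ones. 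Once this is in place, passing to the limit in the finite-generation recursion and controlling the tail over deep generations via one-arm decay yields the nested CLE$_3$.
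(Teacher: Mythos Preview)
Your overall architecture---Edwards--Sokal coupling, FK exploration from the boundary, then FAE inside the FK cut-out domains, then Sheffield--Werner---is exactly the paper's. But there is a genuine gap in your recursion, and it stems from the sentence ``every Ising loop is contained in some FK loop and must touch its bounding FK interface from inside.'' This is false: an outermost Ising loop need not touch the boundary of the outermost FK cut-out domain that contains it. The FAE in a cut-out domain $C_\delta$ only produces the (leftmost) Ising loops that touch $\partial C_\delta$; the remaining outermost Ising loops sit strictly inside the $+$-boundary regions $R_\delta$ left over once those boundary-touching loops are removed. Your iteration ``go inside each discovered Ising loop and repeat'' would therefore descend to level~$2$ before having found all level~$1$ loops, and the limiting ensemble would not be the full outermost collection.

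The paper's fix is the point you are missing: in each residual region $R_\delta$ (which carries $+$ boundary conditions) one \emph{resamples} the FK coupling afresh, runs a new FK exploration, and applies the FAE again in the new cut-out domains. This harvests a further batch of level~$1$ Ising loops; iterating, the unexplored $+$ regions shrink below any $\epsilon$ in finitely many rounds, and only then does one move inside the level~$1$ loops to build level~$2$. Your item~(iii) (``no missed loop'') is precisely this issue, but the mechanism you suggest does not resolve it. Relatedly, the double-point analysis of FK interfaces is used in the paper not to control ``attachment points'' of Ising loops to FK loops, but to show that discrete cut-out domains converge to continuous ones (so that the FAE convergence can be invoked in the limiting geometry), and that their boundaries are simple disjoint curves (a six-arm argument), which is what yields simplicity and disjointness of the harvested Ising loops.
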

The statement we prove is actually slightly stronger (Theorem \ref{thm:main-thm}
in Section \ref{sub:statement-and-strategy}). The precise definition
of the Ising interfaces is given in Section \ref{sub:ising-loops},
the CLE processes are introduced in Section \ref{sub:cle} and the
metric $d_{\mathcal{X}}$ is defined in Section \ref{sub:def-space-loop-collections}.

Our strategy is to identify the scaling limit of these curves by using
the coupling between the Ising model and its random-cluster (FK) representation,
often called Edwards-Sokal coupling \cite{grimmett}. This allows
one to construct an exploration tree describing the Ising loops, by
relying on the recursive application of a two-staged exploration: 
\begin{itemize}
\item We first study the random-cluster interfaces by relying on the fact
that they touch the boundary and hence can be described in terms of
an exploration tree (as in \cite{kemppainen-smirnov-ii}, and similarly
to \cite{benoist-duminil-hongler}). 
\item We then explore the Ising loops, which are contained inside the random-cluster
loops: conditionally on the random-cluster loops, the Ising model
inside has free boundary conditions, allowing one to use the result
of \cite{benoist-duminil-hongler} to identify a subset of the Ising
interfaces. 
\item At the end of the second stage, we obtain a number of a loops. Conditionally
on these loops, the boundary conditions for Ising on the complement
are monochromatic (either completely $+$ or completely $-$), allowing
one to re-iterate the exploration inside of those. 
\end{itemize}
We then show that the conformally invariant interfaces that we have
explored are simple and $\mathrm{SLE}_{3}$-like. Together with the
Markov property inherited from the lattice level, this allows one
to use the uniqueness result of \cite{sheffield-werner} to identify
this limit as $\mathrm{CLE}_{3}$.

For the FK model naturally associated with Ising, the result equivalent to Theorem \ref{thm:main}, namely that the FK interfaces converge to CLE$_{16/3}$, was proved by \cite{kemppainen-smirnov-ii,kemppainen-smirnov-iii}, at least when the domain boundary is analytic. Even though our proof uses a coupling with the FK model to show convergence of the Ising loops, we do not get the joint convergence of FK and Ising interfaces. Another question of interest would be to get a direct proof of the convergence of Ising interfaces to CLE$_3$, i.e., a proof that would do away with using the auxiliary FK model but would instead only rely on the strong properties of CLE to conclude. Such a proof could give a template to use for models beyond Ising.

\subsection{Outline of the Paper}
\begin{itemize}
\item In Section \ref{sec:setup-and-definitions}, we give the definitions
of the graphs, the models, the metrics and the loop ensembles. 
\item In Section \ref{sec:main-theorem}, we give the precise statement
of our main theorem, together with the main steps of the proof. 
\item In Section \ref{sec:scaling-limits-ising-and-fk-ising-interfaces},
we state two results about the scaling limit of Ising and FK interfaces
that are instrumental in our proof, one borrowed from \cite{benoist-duminil-hongler},
and the other one from the Appendix (related to \cite{kemppainen-smirnov-ii}). 
\item In Section \ref{sec:scaling-limit-outermost-loops}, we prove that
the outermost Ising loops have a conformally invariant scaling limit. 
\item In Section \ref{sec:identification-scaling-limit}, we identify the
scaling limit of outermost Ising loops, and then construct the scaling
limit of all Ising loops, thus concluding the proof of the main theorem. 
\item In the Appendix, we study the scaling limit of the FK loops, in particular
proving its existence and conformal invariance, as well as showing
that double points of discrete and continuous FK loops correspond
to each other. 
\end{itemize}

\subsection{Acknowledgements}

The authors would like to thank D. Chelkak, J. Dub\'edat, H. Duminil-Copin,
K. Kyt\"ol\"a, P. Nolin, S. Sheffield, S. Smirnov and W. Werner for interesting
and useful discussions. C.H. gratefully acknowledges the hospitality
of the Courant Institute at NYU, where part of this work was completed, as well as support from the New York Academy of Sciences the Blavatnik Family Foundation, the Latsis Family Foundation, and the ERC Grant CONSTAMIS. C.H. is a member of the SwissMap Swiss NSF NCCR program. 

We thank the anonymous referees for their very helpful comments. 

\section{\label{sec:setup-and-definitions}Setup and Definitions}

\subsection{Graphs}

We consider the usual square grid $\mathbb{Z}^{2}$, with the usual
adjacency relation (denoted $\sim$). We denote by $\left(\mathbb{Z}^{2}\right)^{*}$
the dual graph, by $(\mathbb{Z}^{2})^{m}$ the medial graph (whose
vertices are the centers of edges of $\mathbb{Z}^{2}$; two vertices
of $(\mathbb{Z}^{2})^{m}$ are adjacent if the corresponding edges
of $\mathbb{Z}^{2}$ share a vertex) and by $(\mathbb{Z}^{2})^{b}$
the bi-medial graph (i.e. the medial graph of $(\mathbb{Z}^{2})^{m}$).
Note that $(\mathbb{Z}^{2})^{b}$ has a natural embedding in the plane
as $\frac{1}{2}\mathbb{Z}^{2}+(\frac{1}{4},\frac{1}{4})$. In the following, we will be interested in particular finite subgraphs of $\mathbb{Z}^{2}$, namely those subgraphs that can be constructed as the collection of vertices and edges contained in a given simply-connected finite union of faces of $\mathbb{Z}^{2}$. We will refer to such finite subgraphs as \emph{discrete domains}. For a discrete domain
$\mathcal{G}$, we denote by $\mathcal{G}^{*}$ the dual of $\mathcal{G}$,
by $\mathcal{G}^{m}$ the medial of $\mathcal{G}$ and by $\mathcal{G}^{b}$
the bi-medial of $\mathcal{G}$ (see Figure \ref{fig:ising-graph}, in particular for how these are defined at the boundary). We denote by $\partial\mathcal{G}\subset\mathcal{G}$
the (inner) boundary of $\mathcal{G}$, which we either see as the
set of vertices of $\mathcal{G}$ adjacent to $\mathbb{Z}^{2}\setminus\mathcal{G}$,
or as the circuit of edges separating the faces of $\mathcal{G}$
from those of $\mathbb{Z}^{2}\setminus\mathcal{G}$.

Consider a Jordan domain $\Omega\subset\mathbb{C}$,
i.e. such that its boundary $\partial\Omega$ is a simple closed curve. We call \emph{discretization}
of $\Omega$ a family $\left(\Omega_{\delta}\right)_{\delta}$ of
discrete domains of $\delta\mathbb{Z}^{2}$ (the square grid of mesh
size $\delta>0$) such that $\partial\Omega_{\delta}\to\partial\Omega$
(where we identify $\partial\Omega_{\delta}$ with its edge circuit)
as $\delta\to0$ in the topology of uniform convergence up to reparametrization. Note that any Jordan domain admits a discretization.

\subsection{Ising Model}

The Ising model (see e.g. \cite{grimmett, friedli-velenik} for modern introductions) on a discrete domain $\mathcal{G}$ at inverse temperature $\beta>0$
consists of random configurations $\left(\sigma_{x}\right)_{x\in\mathcal{G}}$
of $\pm1$ spins with probability proportional to $\exp\left(-\beta H\left(\sigma\right)\right)$
where the energy $H$ is given by $-H\left(\sigma\right)=\sum_{x\sim y}\sigma_{x}\sigma_{y}$
(the sum is over all pair of adjacent spins of $\mathcal{G}$). We
will focus on the Ising model at the critical temperature, i.e. with
$\beta=\beta_{c}$. If there are no particular conditions on the spins
of $\partial\mathcal{G}$ we speak of \emph{free boundary conditions},
if the spins of $\partial\mathcal{G}$ are conditioned to be $+1$
(resp. $-1$), we speak of \emph{$+$ boundary conditions} (resp.
\emph{$-$ boundary conditions}).

\subsection{FK Model}

The \emph{Fortuin-Kasteleyn (FK) model} (or random-cluster model, see \cite{grimmett} for
a background reference) on a discrete domain $\mathcal{G}$ is a (dependent)
bond percolation model, which assigns a random \emph{open} or \emph{closed}
state to the edges of $\mathcal{G}$. We then call \emph{configuration} the set of the open edges of $\mathcal{G}$. The $\mathrm{FK}\left(p,q\right)$
model (with parameters $0\leq p \leq 1$, and $q\geq 1$ being a real number) assigns to a configuration $\omega$ a probability proportional
to $p^{\mathbf{o}\left(\omega\right)}\left(1-p\right)^{\mathbf{c}\left(\omega\right)}q^{\mathbf{k}\left(\omega\right)}$,
where $\mathbf{o}\left(\omega\right)$ is the number of open edges,
$\mathbf{c}\left(\omega\right)$ the number of closed edges and $\mathbf{k}\left(\omega\right)$
the number of clusters of $\omega$, i.e. the number of connected components in the subgraph of $\mathcal{G}$ obtained by deleting all the closed edges. The above description defines what is called the FK model with \emph{free boundary conditions}; the FK model with \emph{wired boundary
conditions} if obtained when conditioning all the boundary edges to be open (i.e.
the edges between vertices of $\partial\mathcal{G}$ are forced open).

An important feature of the two-dimensional FK model is \emph{duality} (see \cite[Section 6.1]{grimmett}).
For an FK configuration $\omega$ on a discrete domain $\mathcal{G}$,
we define the \emph{dual configuration $\omega^{*}$} on $\mathcal{G}^{*}$
whose open edges are the dual to the closed edges of $\omega$ and
vice versa. It can be shown that for $p\left(1-p\right)^{-1}p^{*}\left(1-p^{*}\right)^{-1}=q$,
the dual of an $\mathrm{FK}\left(p,q\right)$ configuration on $\mathcal{G}$
with wired boundary conditions is an $\mathrm{FK}\left(p^{*},q\right)$
configuration on $\mathcal{G}^{*}$ with free boundary conditions.
The \emph{self-dual} (or critical, see \cite{BeDC_FK}) FK model corresponds to $\mathrm{FK}\left(p_{\mathrm{sd}},q\right)$,
where the self-dual value $p_{\mathrm{sd}}=\frac{\sqrt{q}}{1+\sqrt{q}}$
is such that $p_{\mathrm{sd}}^{*}=p_{\mathrm{sd}}$.

\subsection{FK-Ising Model }

When $q=2$, the FK model is called the \emph{FK-Ising model}. The
Ising model at inverse temperature $\beta$ can be sampled from the
FK-Ising model with $p=1-e^{-2\beta}$ by performing percolation on
the FK clusters, see e.g. \cite[Section 2.3]{grimmett}: for each FK cluster we
toss a balanced coin and assign the same $\pm1$ spin value (depending on the tossed coin) to all
the vertices of the cluster, and do this independently for each cluster.
The self-dual FK-Ising model with $p_{\mathrm{sd}}=\frac{\sqrt{2}}{1+\sqrt{2}}$
corresponds to the critical Ising model. In this paper, the only FK
model we will work with is the self-dual FK-Ising model. In order
to clearly distinguish it from the Ising model, we will often refer
to the self-dual FK-Ising model as just the FK model.

\subsection{\label{sub:ising-loops}Ising Loops}

A sequence of vertices $v_{1},\ldots,v_{n}$ is called a \emph{strong
path} if $v_{i}\sim v_{i+1}$ for $1\leq i<n$ (where $\sim$ denotes
the adjacency relation) and a \emph{weak path} if $v_{i}$ is weakly
adjacent to $v_{i+1}$ (i.e. $v_{i}$ and $v_{i+1}$ share a face) for $1\leq i<n$.

Consider the Ising model on a discrete domain $\mathcal{G}\subset\mathbb{Z}^{2}$.
An \emph{Ising loop} is an oriented simple loop on $\mathcal{G}^{*}$
(i.e. a closed strong path of $\mathcal{G}^{*}$ such that no edge in the path
is used twice) and such that any edge of the loop has a $+$ spin
on its left and a $-$ spin on its right. In other words, an Ising
loop separates a weak path of $+$ spins and a weak path of $-$ spins,
and is hence clockwise-oriented if it has $+$ spins outside and $-$
spins inside (and counter-clockwise oriented otherwise). An Ising
loop is called \emph{leftmost }if it follows a strong path of $+$
on its left side, and \emph{rightmost} if it follows a strong path
of $-$ spins on its right.

In a domain carrying $+$ boundary conditions, an Ising loop is called
\emph{outermost} if it is not strictly contained inside another Ising
loop, i.e. if it is not separated from the boundary by a a closed
weak path of $-$ spins. Let us now define the \emph{level} of an
Ising loop (in a domain with $+$ boundary conditions). An Ising loop
is said to be of level 1, if it is outermost, of level $2k$ for $k\geq1$
if it is contained inside an Ising loop of level $2k-1$ and if it
is not separated by a weak path of $+$ spins from that loop, and
of level $2k+1$ for $k\geq1$ if it is contained inside of an Ising
loop of level $2k$ and if it is not separated by a weak path of $-$
spins from that loop. Note that two distinct outermost loops can intersect. However, the level of a loop is a well-defined integer: for example, there are not outermost loops of level $2$, as an outermost loop cannot be strictly contained in the interior of another Ising loop.

\subsection{FK Loops and Cut-Out Domains}\label{sec:fkcod}

Given an FK configuration, the set of FK interfaces forms a set of
loops on $\mathcal{G}^{b}$. A bi-medial edge is part of an FK interface
if it lays between a primal FK cluster and a dual FK cluster, i.e.
if it does not cross a primal open edge or a dual open edge. With
wired or free boundary conditions, it is easy to see that the set
of bi-medial edges that are part of an FK interface forms a collection
of disjoint loops (in contrast to Ising loops, which may intersect).

The level of an FK loop is defined by declaring a loop of level 1
or outermost if it is not contained inside another FK loop, and of
level $k>1$ if it is contained in the interior of exactly $k-1$ distinct FK loops. An FK loop of level $k>1$ is hence an outermost FK loop in the interior of an FK loop of level $k-1$.
Note that, as FK loops are disjoint, the interior of two FK loops of same level $k$ are disjoint.

We call the interior of an outermost FK loop a \emph{cut-out
domain}. This notion will be crucial for us in the scaling limit.
The set of FK loops satisfies the following spatial Markov property
(see \cite[Theorem 3.4]{grimmett}): consider the FK model with wired boundary conditions,
conditionally on the outermost FK loops, the model inside the cut-out
domains consists of independent FK models with free boundary conditions. 
\begin{rem}
\label{rem:ising-loops-subset-dual-fk}In the coupling with FK, the
Ising loops are always a subset of the dual FK configuration. In particular
Ising loops and FK loops never cross. As a consequence, all Ising loops
are contained in the cut-out domains of the corresponding FK configuration. 
\end{rem}
\begin{figure}[!ht]
\centering \includegraphics[width=12cm]{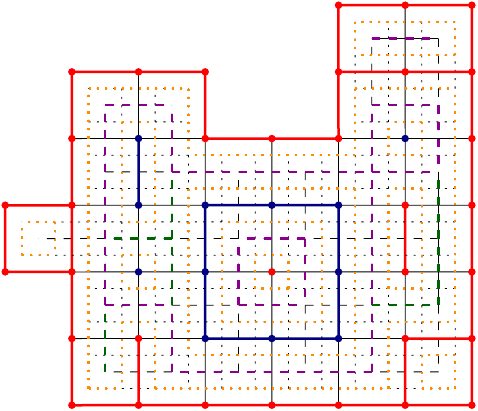} \caption{Ising and FK loops. Plain lines represent the domain $\mathcal{G}$,
dashed lines represent $\mathcal{G}^{*}$ and dotted lines represent
$\mathcal{G}^{b}$. In red and blue, the primal FK configuration $\omega$
($k(\omega)=7$), red corresponding to FK connected components carrying
$+$ Ising spins, and blue to FK components carrying $-$ spins. The
green and purple represent the dual FK configuration, with purple being the subset traced by Ising loops. In orange, the set of
FK loops.}
\label{fig:ising-graph}
\end{figure}

\subsection{\label{sub:def-space-loop-collections}The Space
of Loop Collections}
An oriented loop $\gamma$ is an equivalence class of continuous maps from the unit circle $\mathbb{S}^1$ to the plane $\mathbb{R}^2$, where the equivalence is given by orientation-preserving reparametrizations.

Consider the metric $d_{\Gamma}$ on the space of oriented loops defined
as the supremum norm up to reparametrization: $d_{\Gamma}\left(\gamma,\tilde{\gamma}\right)=\inf\|\gamma-\tilde{\gamma}\|_{\infty}$,
where the infimum is taken over all orientation-respecting parametrizations
of the loops $\gamma$ and $\tilde{\gamma}$. We define the space
$\Gamma$ to be the completion of the set of simple oriented loops
for the metric $d_{\Gamma}$: $\Gamma$ is the set of oriented \emph{non-self-crossing}
loops (including trivial loops reduced to points).

The space $\mathcal{X}$ of loop collections is the space of at most
countable collections $\left\{ \gamma_{i}\right\} _{i\in I}$ of elements of $\Gamma$
(loops can appear with multiplicity, and we include the empty collection), such that 
\begin{itemize}
\item For each $i\in I$, the loop $\gamma_{i}$ is not reduced to a point. 
\item For each scale $\varepsilon >0$,
the set of indices $\left\{ i\in I: \mathrm{diam}\left(\gamma_{i}\right)\geq\varepsilon \right\} $
is finite, where $\mathrm{diam}$ denotes the Euclidean diameter.
\end{itemize}

We now define a $\sigma$-algebra on $\mathcal{X}$. A matching of two sets $I$ and $J$ will denote a subset $\pi \subseteq I \times J$ such that for each $i\in I$ there is at most one $j \in J$ such that $(i,j)\in\pi$, and reciprocally, for each $j\in J$ there is at most one $i \in I$ such that $(i,j)\in\pi$. Given a matching $\pi$, we denote by $I^\pi$ (resp. $J^\pi$) the set of unmatched indices of $I$ (resp. $J$), i.e. the subset of indices $i \in I$ (resp. $j\in J$) such that for all $j\in J$ (resp. $i\in I$), $(i,j)\notin \pi$.

We work with the Borel $\sigma$-algebra on $\mathcal{X}$ associated to the following metric: 
\[
d_{\mathcal{X}}\left(\left\{ \gamma_{i}\right\} _{i\in I},\left\{ \gamma'_{j}\right\} _{j\in J}\right)=\inf_{\pi}\max\left(\sup_{\left(i,j\right)\in\pi}d_{\Gamma}\left(\gamma_{i},\gamma'_{j}\right),\sup_{i\in I^\pi}\mathrm{diam}\left(\gamma_{i}\right),\sup_{j\in J^\pi}\mathrm{diam}\left(\gamma'_{j}\right)\right),
\]
where the infimum is taken over all matchings $\pi$ of the index sets $I$ and $J$.

\subsection{The Interior of a Non-Self-Crossing Loop}

The following Lemma gives a number of useful facts about non-self-crossing loops

\begin{lem}\label{lem:cut-out}
The connected components of the interior of a non-self-crossing loop are open sets homeomorphic to discs, whose boundaries can be traced by continuous curves.
\end{lem}
We call these connected components \emph{cut-out domains}. The boundary of a cut-out domain is a curve, which is not necessarily simple. We will make convergence statement about cut-out domains, these will always be for the topology of uniform convergence up to reparametrization for the boundary curves.

\begin{proof}
Let us first give a formal definition of what it means for a connected component of the complement of a loop $\gamma$ to be in the interior of $\gamma$. Consider a continuous family of simple smooth curves $\gamma_t$ that converge to $\gamma$ when $t\rightarrow 0$. Without loss of generality, we can assume that all the curves $\gamma_t$ and $\gamma$ are positively oriented. Given a point $z$ in the complement of $\gamma_t$, the line integral
\begin{equation}\label{eq:int}
I_t(z)=\frac{1}{2\pi i}\oint_{\gamma_t} \frac{1}{w-z} dw
\end{equation}
takes the value $1$ or $0$ depending on whether the point $z$ is inside or outside $\gamma_t$. For any point $z\notin \gamma$, the quantity $I_t(z)$ is eventually well-defined (as $t \downarrow 0$), and as it is continuous in $t$, needs to be constant. We obtain a limiting value $I(z)= 1\ {\rm or}\ 0$, defined off $\gamma$, and locally constant where defined, hence constant on the connected components of the complement of $\gamma$. We call such a component interior if $I=1$ on it, and exterior otherwise.

We now prove that connected components of the complement of a non-self-crossing loop $\gamma$ in the compactified plane $\widehat{\mathbb{R}^2}=\mathbb{S}^2$ are homeomorphic to open discs. Indeed, consider a sequence of simple loops $\gamma^n\rightarrow\gamma$, and a point $z\notin\gamma$. Let $D$ be the connected component of $\mathbb{S}^2\setminus\gamma$ containing $z$. Consider the uniformization map $\phi^n$ from the unit disk $\mathbb{D}$ to the connected component of $\mathbb{S}^2\setminus\gamma^n$ containing $z$, normalized so that $0$ is sent to $z$ and so that the derivative there is a positive real. These maps converges uniformly on compact subsets of the disc to a conformal map $\phi$ (by Caratheodory's theorem \cite[Theorem 1.8]{Pom}), and the image $\phi(\mathbb{D})=D$, is hence the image of a disk by a one-to-one bicontinuous map, so is itself an open set homeomorphic to the disk. 

Finally, we show that the boundary of the cut-out domain $D$ containing $z$ can be traced by a continuous curve. We first show that, given another cut-out domain $D_w$ containing the point $w$, we can construct a subloop $\widetilde{\gamma}$ of $\gamma$ whose interior is a reunion of interior connected components of $\gamma$, that include $D$ but not $D_w$.

We consider an approximation by simple curves $\gamma^n\rightarrow\gamma$. For $n$ large enough, the points $z$ and $w$ are interior to $\gamma^n$. We call $\eta^n$ the infimum of the diameters of curves joining two points of $\gamma^n$ and otherwise staying in its interior that separate $z$ from $w$ in the interior of $\gamma^n$. We then pick such a curve, of diameter no more than $2\eta^n$ and call its boundary points $\gamma^n(u^n)$ and $\gamma^n(v^n)$.  Up to extracting a subsequence, we can assume by compactness that $u^n\rightarrow u$ and $v^n\rightarrow v\neq u$. Note that $\eta^n\rightarrow 0$ as $z$ and $w$ belong to different connected components in the limit. Hence $\gamma(u)=\gamma(v)$ is a double point of the curve, and moreover, by construction, the two corresponding subloops of $\gamma$, say $\gamma_1$ and $\gamma_2$, are positively oriented, non-self-crossing, and do not cross each other. By looking at the integral formula (\ref{eq:int}) to determine whether a point is surrounded by a loop, and using that $I^\gamma=I^{\gamma_1}+I^{\gamma_2}$, we see that each interior cut-out domain of $\gamma$ is either in the interior of $\gamma_1$ or in the interior of $\gamma_2$. Moreover it is clear that one of these loops, say $\gamma_1$, surrounds $D_w$, and the other, $D$. This provides the subloop $\widetilde{\gamma}=\gamma_2$ as claimed.

Enumerating the interior cut-out domains of $\gamma$ using points they contain $z,w_1,w_2,\cdots, w_l,\cdots$, we can iteratively extract subloops $\gamma^l$ of $\gamma$ that contain $D$ in their interior but not $D_{w_1}, \cdots , D_{w_l}$. The curves $\gamma^l$ converge up to reparametrization (at the very least it is easy to see that we can assume convergence up to extracting a subsequence). The limiting loop $\gamma_D$ is by construction a positively oriented non-self-crossing loop whose interior is $D$.
\end{proof}

\subsection{Nested and Non-Nested Loop Collections}\label{sec:nested}

We say that two loops $\gamma$ and $\gamma'$ of $\Gamma$ \emph{do not cross each other} if we can find two sequences $(\gamma_n)_{n\in\mathbb{N}}$ and $(\gamma'_n)_{n\in\mathbb{N}}$ of elements of $\Gamma$ such that $\gamma_n\rightarrow\gamma$, $\gamma'_n\rightarrow\gamma'$, and for each $n\in \mathbb{N}$, the loops $\gamma_n$ and $\gamma'_n$ are disjoint.

\begin{lem}
Given two distinct non-self-crossing loop that do not cross each other, then either
\begin{itemize}
\item their interiors are disjoint, or
\item the interior of one of the loops, say $\gamma$, is included in the interior of the other loop $\gamma'$.
\end{itemize}
\end{lem}

In the second case, we say that the loop $\gamma$ is \emph{nested} in $\gamma'$.
\begin{proof}
We omit the proof of this simple result, which can formally be proven using (\ref{eq:int}).
\end{proof}
A \emph{non-crossing} collection of loops is a loop collection of $\mathcal{X}$ such that no two loops cross each other.
A \emph{non-nested} collection of loops is a loop collection of $\mathcal{X}$ that is non-crossing and such that no loop is nested in another.

Given a non-crossing loop collection such that no two loops are equal, we define the \emph{level} of a loop as the number of loops containing it, plus $1$. Note that the level of a loop is always finite, by the diameter constraint on loop collections. \emph{Outermost loops} are loops of level 1, or alternatively, loops that are not nested in any other loop. Similarly, \emph{innermost loops} are loops such that no other loop is nested inside of them. Given a non-crossing collection of loops $\mathcal{C}$, we call \emph{cut-out domains of} $\mathcal{C}$ the cut-out domains of the innermost loops of $\mathcal{C}$.

This general definition for the level of a loop does not apply to discrete Ising loops (which is not a non-crossing collection, see Section \ref{sub:ising-loops}), but coincides with the definitions we give for discrete FK loops in Section \ref{sec:fkcod}.

\subsection{Measurability of Some Loop Collections}

\begin{lem}\label{lem:top}
The space $\mathcal{X}$ is complete and separable, hence is a Polish space. Moreover, the following
events on $\mathcal{X}$ are measurable for the Borel $\sigma$-algebra: 
\begin{itemize}
\item The collection $\left(\gamma_{i}\right)_{i\in I}$ is non-crossing.  
\item The collection $\left(\gamma_{i}\right)_{i\in I}$ is non-nested. 
\item The loops of the collection $\left(\gamma_{i}\right)_{i\in I}$ are
disjoint. 
\item All the loops of $\left(\gamma_{i}\right)_{i\in I}$ are simple. 
\end{itemize}
\end{lem}

\begin{proof}
The set of finite collections of simple loops made of edges of one of  the lattices $2^{-n}\mathbb{Z}^2$ forms a countable family which is dense in $\mathcal{X}$. In other words, $\mathcal{X}$ is separable.

We now show that $\mathcal{X}$ is complete. Let $\mathcal{C}_n$ be a Cauchy sequence in $\mathcal {X}$. Let us call $N_n(\varepsilon)$ the number of loops of $\mathcal{C}_n$ that are of diameter larger than or equal to $\varepsilon$. $N_n$ is a non-increasing integer-valued left-continuous function that goes to $0$ as $\varepsilon$ goes to $\infty$. From this, we can see that $N_n(\varepsilon)$ converges pointwise to a function $N$ as $n\to \infty$, except maybe at the jump points (i.e., discontinuity points) of the limiting function $N$. Let us consider a sequence of sizes $\varepsilon_i\to 0$ whose elements are distinct from the jump points of $N$. For a fixed $\varepsilon_i$, for any $n,m$ large enough (say $n,m\geq n_0$), the collections $\mathcal{C}_n$ and $\mathcal{C}_m$ will have the same number of loops of diameter larger than $\varepsilon_i$. Moreover, provided that $n,m$ are large enough, any matching between the loops of $\mathcal{C}_n$ and $\mathcal{C}_m$ that is close to providing the optimal matching distance $d_{\mathcal{X}}(\mathcal{C}_n,\mathcal{C}_m)$ will have to match all of the loops of diameter larger than $\varepsilon_i$ with each other. From this, we see that we can match consistently (for all $n\geq n_0$) the $N(\varepsilon_i)$ large loops in such a way that their $d_\Gamma$ distance goes uniformly to $0$. The fact that the space $\mathcal{X}$ is complete hence follows from the fact that $\Gamma$ is.

A number of sets of loop collections can then be shown to me measurable:
\begin{itemize}
\item The set of collections that consists of non-crossing loops is a closed set of $\mathcal{X}$, hence is measurable. The same holds for non-nested collections.
\item For each $\varepsilon,\delta >0$, let us consider the set of collections $D_{\varepsilon,\delta}$ such that the open $\delta$-neighborhoods of all the loops of diameter strictly larger than $\varepsilon$ are all disjoint. The set $D_{\varepsilon,\delta}$ is closed, and we can write the set $D$ of collections such that all loops are disjoint as $D=\cap_\epsilon \cup_\delta D_{\varepsilon,\delta}$. Hence $D$ is a measurable set.

\item We say that a loop $\gamma$ is in the set $\Sigma_\delta$ of approximately simple loops at scale $\delta$ if any of its double points cuts up the loop $\gamma$ in two pieces that are not both of diameter larger than or equal to $\delta$. Note that $\Sigma_\delta$ is an open set.
For each $\varepsilon,\delta >0$, let us consider the set of collections $S_{\varepsilon,\delta}$ such that all the loops of diameter larger than or equal to $\varepsilon$ belong to $\Sigma_\delta$. The set is $S_{\varepsilon,\delta}$ is open, and we can write the set $S$ of collections such that all loops are simple as $S=\cap_\epsilon \cap_\delta S_{\varepsilon,\delta}$. Hence $S$ is a measurable set.
\end{itemize}

\end{proof}

\subsection{\label{sub:cle}Conformal Loop Ensembles}

The CLE measures have been introduced in \cite{sheffield} as the
natural candidates to describe conformally invariant collections of
loops arising as scaling limits of statistical mechanics interfaces.
They form a family indexed by $\kappa\in(\frac{8}{3},8]$ of random
collections of $\mathrm{SLE}_{\kappa}$-like loops.

The usual $\mathrm{CLE}_{\kappa}$ measure (as opposed to nested $\mathrm{CLE}_{\kappa}$, defined below) is defined on a planar
simply-connected domain and consists of a collection of non-nested
loops. For $\kappa\in(\frac{8}{3},4]$, the usual $\mathrm{CLE}_{\kappa}$
have an elegant loop-soup construction \cite[Theorem 1.6]{sheffield-werner}: the
$\mathrm{CLE}_{\kappa}$ loops can be constructed by taking the outer boundaries
of clusters of loops in a Brownian loop soup of intensity $c=\left(3\kappa-8\right)\left(6-\kappa\right)/2\kappa$.

We now iteratively define a random loop collection called \emph{nested} $\mathrm{CLE}_{\kappa}$. Its outermost loop (or level $1$) have the law of a usual $\mathrm{CLE}_{\kappa}$. Given its loops of level $k\geq 1$, we define its loops of level $k+1$ as independent samples of usual $\mathrm{CLE}_{\kappa}$ in each of the cut-out domains associated to the loops of the level $k$. We choose to orient CLE loops according to their level: clockwise
for odd level loops, and counterclockwise for even level loops.

\subsection{\label{sub:markovian-characterization}CLE Markovian Characterization}

An important result on CLEs is their Markovian characterization. 
\begin{thm*}[{\cite[Theorem 1.4 and Section 2.1]{sheffield-werner}}]
A family of measures $\mu_{\Omega}$ on non-nested collections of
simple loops defined on simply-connected domains $\Omega$ is the
usual $\mathrm{CLE}_{\kappa}$ for a certain $\kappa\in(\frac{8}{3},4]$
if and only if the following holds with probability $1$: 
\begin{itemize}
\item The collection is locally finite: for any $\varepsilon>0$ and any bounded
region $K\subseteq\Omega$, there are only finitely-many loops of
diameter greater than $\varepsilon$ in $K$. 
\item Distinct loops of the collection are disjoint. 
\item The family is conformally invariant: for any conformal mapping $\varphi:\Omega\to\varphi\left(\Omega\right)$,
we have $\varphi_{\ast}\mu_{\Omega}=\mu_{\varphi\left(\Omega\right)}$. 
\item The family satisfies the \emph{Markovian restriction property} on
any simply-connected domain $\Omega$: for any compact set $K\subset\Omega$
such that $\Omega\setminus K$ is simply-connected, if $\left\{ \gamma_{i}\right\} _{i\in I}$
is sampled from $\mu_{\Omega}$, setting $I_{K}:=\left\{ i:\gamma_{i}\cap K\neq\emptyset\right\} $
we have that $\left\{ \gamma_{i}\right\} _{i\in I}$ conditioned on
$\left\{ \gamma_{i}\right\} _{i\in I_{K}}$ has the law of $\mu_{\Omega\setminus\mathcal{L}_{K}}$,
where $\mu_{\Omega\setminus\mathcal{L}_{K}}$ is defined as the independent
product of $\mu_{\Omega'}$ taken over all connected components $\Omega'$
of $\Omega\setminus\mathcal{L}_{K}$ and where $\mathcal{L}_{K}=K\cup\left\{ \mathrm{Inside}\left(\gamma_{i}\right)\cup\gamma_{i}:i\in I_{K}\right\} $. 
\end{itemize}
\end{thm*}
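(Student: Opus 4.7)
The plan is to show that the four axioms pin down the law of the loop collection uniquely up to a single parameter, so that any such measure must agree with the CLE$_{\kappa}$ produced by the loop-soup construction. The strategy is to extract from the static collection a continuous exploration that grows a hull from a boundary point, and to identify this exploration with an SLE-driven branching tree by leveraging conformal invariance together with the Markovian restriction property; the parameter $\kappa$ will then fall out of the normalization of the driving function.

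By conformal invariance it suffices to work in the upper half-plane $\mathbb{H}$ with marked point $0\in\partial\mathbb{H}$. Fix a capacity-parametrized growing family of compact hulls $(K_t)_{t\geq 0}$ generated by a Loewner chain with (yet-to-be-identified) driving function $W_t$, and define the exploration recursively: the hull $K_t$ is a "trunk" together with all the loops of the collection that the trunk has swallowed up to time $t$. When the trunk first touches a new loop $\gamma$, one traces $\gamma$ (as an internal branch) and then resumes the trunk from the appropriate endpoint; the disjointness and local finiteness axioms ensure that this recursive procedure involves only finitely many macroscopic branchings before any given capacity time, so the exploration is well defined.

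Applying the Markovian restriction axiom to $K=K_t$ shows that, conditionally on $(K_s)_{s\leq t}$, the loops in $\mathbb{H}\setminus K_t$ that have not yet been met form a sample of the same family of measures, pushed forward by the conformal map $\varphi_t:\mathbb{H}\setminus K_t\to\mathbb{H}$, with the tip of $K_t$ playing the role of the marked boundary point. Hence the driving function $W_t$ has stationary independent increments in its natural filtration, and conformal invariance combined with scale invariance of $\mathbb{H}$ upgrades this to full scaling covariance. Together with continuity of $W_t$ (which should follow from local finiteness and a Kemppainen--Smirnov-type crossing estimate), this forces $W_t$ to be a standard Brownian motion with some diffusivity $\sqrt{\kappa}$. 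The branches of the exploration tree are then SLE$_\kappa$-type curves, and the whole loop ensemble is measurably determined by the tree, leaving only the choice of $\kappa$ free. The restriction $\kappa\in(8/3,4]$ comes for free: for $\kappa>4$ the SLE branches would be non-simple, contradicting the disjointness and simplicity axioms, while for $\kappa\leq 8/3$ the branching tree is degenerate and the loop collection must be empty.

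The main obstacle is the very first step, namely turning the static collection into a continuous Loewner-driven exploration. A priori the Markovian restriction property is only stated for fixed compact sets $K$, not for the hull of an evolving process, so one has to check that the natural "discover loops one by one" dynamics can be realized as an honest Loewner chain: the successive hitting times of macroscopic loops should form a dense, well-ordered subset of $(0,\infty)$, and the tracing of each encountered loop should be performed in a way that does not create a jump in $W_t$. Establishing this continuity, and then justifying that the Markov property can be applied at the (random, $\mathcal{F}_t$-measurable) hulls $K_t$, is the technical heart of the argument in \cite{sheffield-werner} and is closely related to the interface regularity estimates of \cite{kemppainen-smirnov}.
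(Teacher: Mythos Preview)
The paper does not prove this statement: it is quoted verbatim as the Markovian characterization theorem of Sheffield and Werner \cite{sheffield-werner} and is used as a black box in Section~\ref{sub:proof-main-thm} to identify the scaling limit of the outermost Ising loops as $\mathrm{CLE}_{3}$. There is therefore nothing in the present paper to compare your attempt against.

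That said, your sketch does not accurately reflect the actual argument in \cite{sheffield-werner}. The heart of their proof is not to run a Loewner chain directly on the loop collection and read off independent increments of the driving function. Rather, they study the law of the loop that surrounds a marked interior point, let that point tend to the boundary, and show that the resulting ``pinned'' boundary bubble is an $\mathrm{SLE}_{\kappa}(\kappa-6)$ excursion; the parameter $\kappa$ is then fixed by matching with the loop-soup construction at intensity $c(\kappa)=(3\kappa-8)(6-\kappa)/2\kappa$. Your step ``$W_t$ has stationary independent increments because of Markovian restriction applied to $K=K_t$'' is where the gap lies: the restriction axiom is stated for deterministic $K$, and upgrading it to the random, growing hull of an exploration is precisely the delicate part --- one cannot simply invoke it at $K_t$ without the substantial intermediate work (pinned-loop analysis, two-point estimates, and the coupling with the Brownian loop soup) that occupies most of \cite{sheffield-werner}. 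Your last paragraph acknowledges this obstacle, but the preceding paragraphs present the conclusion as if it were already in hand.
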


\section{\label{sec:main-theorem}Main Theorem}

\subsection{\label{sub:statement-and-strategy}Statement and Strategy}

Let us now give the precise version of our main result: 
\begin{thm}
\label{thm:main-thm}Consider the critical Ising model with $+$ boundary
conditions on a discretization $\left(\Omega_{\delta}\right)_{\delta>0}$
of a Jordan domain $\Omega$,. Then as the mesh size $\delta\to0$,
the set of all leftmost Ising loops converges in law with respect
to $d_{\mathcal{X}}$ to the nested $\mathrm{CLE}_{3}$.

Furthermore, for any $\varepsilon>0$, the following holds with probability tending to $1$ as $\delta\to0$: for any Ising loop $\ell$ of diameter larger than $\varepsilon$, there exists
a leftmost loop $\ell_{L}$ such that $ d_\Gamma ( \ell, \ell^L ) \leq \varepsilon $ and such that the connected components of
$\left(\ell\cup\ell^{L}\right)\setminus\left(\ell\cap\ell^{L}\right)$
have diameter less than $\varepsilon$. \end{thm}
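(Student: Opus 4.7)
The plan is to build the scaling limit of the full collection of leftmost Ising loops level by level, using the Edwards--Sokal coupling to reduce each level to objects whose scaling limits are already known: FK-Ising interfaces (via \cite{kemppainen-smirnov-ii} and the Appendix) and free-boundary Ising arcs, whose limit is the Free Arc Ensemble of \cite{benoist-duminil-hongler}. Once the outermost loops are identified as a conformally invariant, locally finite, non-nested collection of disjoint simple loops satisfying a Markovian restriction property, the Sheffield--Werner characterization stated in Section \ref{sub:markovian-characterization} will pin down the limit as $\mathrm{CLE}_3$. Nested levels then follow by iterating the construction inside each outermost loop, using monochromaticity of Ising boundary conditions on its complement.

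First I would establish tightness of the collection of leftmost Ising loops in $(\mathcal{X},d_{\mathcal{X}})$. The two non-trivial points are (i) the annulus crossing estimates for Ising interfaces needed to apply a Kemppainen--Smirnov-type argument loop by loop, and (ii) local finiteness: for any $\epsilon>0$, only finitely many leftmost loops have diameter $\geq\epsilon$. Both follow from standard RSW-type bounds for the Ising/FK-Ising model combined with the fact that distinct leftmost loops are separated by strong $+$ paths on the primal lattice. Along a subsequence, one then extracts a limit $\mathcal{L}$ of leftmost loops supported on simple loops (by the $\mathrm{SLE}_3$-like regularity inherited from the results of Section \ref{sec:scaling-limits-ising-and-fk-ising-interfaces}), which are pairwise disjoint (since in the discrete they are separated by macroscopic $+$ paths as soon as they are macroscopic).

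Next I would identify the outermost level of $\mathcal{L}$ by the two-stage exploration described in the outline. First expose the outermost FK loops: by the Appendix/\cite{kemppainen-smirnov-ii} they converge to a conformally invariant collection of loops, whose interiors give the cut-out domains $\{\Omega'_i\}$. Conditionally on these, the Ising model inside each $\Omega'_i$ has free boundary conditions, so the family of Ising arcs inside converges, by \cite{benoist-duminil-hongler}, to independent copies of the Free Arc Ensemble in the $\Omega'_i$. Combining FAE arcs with the appropriate pieces of FK cut-out boundaries reconstructs, in the scaling limit, the outermost leftmost Ising loops (with orientation fixed by the $+$ side being outside). This gives conformal invariance of the outermost-level limit. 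The Markovian restriction property of Section \ref{sub:markovian-characterization} is inherited from the discrete spatial Markov property of Ising under $+$ boundary conditions: conditionally on the outermost loops touching a compact $K$, the Ising model on the complementary components again has $+$ boundary conditions, and one reapplies the same exploration. Together with simplicity, disjointness and local finiteness obtained above, the Sheffield--Werner theorem forces this outermost collection to be the usual $\mathrm{CLE}_3$. Iterating inside each outermost loop (with $-$ boundary conditions, handled by the symmetry $\sigma\mapsto -\sigma$) produces nested $\mathrm{CLE}_3$.

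The hard part, and the genuine content beyond assembling \cite{benoist-duminil-hongler}, \cite{kemppainen-smirnov-ii} and \cite{sheffield-werner}, is the second assertion of the theorem: showing that every macroscopic Ising loop is well approximated by a leftmost loop in the symmetric-difference sense. An Ising loop and its associated leftmost loop can differ along segments where the dual FK path bordering them runs through the interior rather than along a $+$ strong path; one must argue that such discrepancies cannot produce macroscopic bubbles. The strategy is to use Remark \ref{rem:ising-loops-subset-dual-fk}: both loops lie on the same dual FK configuration, and a macroscopic discrepancy would force a macroscopic piece of FK dual structure enclosed by $-$ spins on one side and $+$ spins on the other in a way ruled out by RSW/crossing estimates for the coupled $+$ boundary model, together with the double-point analysis of FK interfaces performed in the Appendix. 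This is also where the matching between double points of discrete FK loops and their scaling limit counterparts enters, ensuring that the exploration recovered in the scaling limit is the correct one. Modulo these estimates, the scheme above converts convergence of FK interfaces and of free-boundary Ising arcs into convergence of the full nested Ising loop ensemble to $\mathrm{CLE}_3$.
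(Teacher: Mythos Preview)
Your overall architecture matches the paper's, but there are two concrete errors in how you describe the outermost-level exploration. First, the outermost Ising loops are \emph{not} obtained by ``combining FAE arcs with the appropriate pieces of FK cut-out boundaries'': Ising loops live on the dual lattice and share no edges with the (bi-medial) FK loop boundaries. In the paper the loop around a point $z$ in a cut-out domain $C$ is recovered purely as the union, over boundary points $b\in\partial C$, of the innermost FAE arc separating $z$ from $b$; no piece of $\partial C$ enters the loop. Second, a single pass of the two-stage exploration does \emph{not} discover all outermost Ising loops: it only finds those that touch the boundary of some FK cut-out domain. Outermost Ising loops lying strictly inside the $+$ regions $R_\delta\subset C_\delta$ left over after gluing arcs are missed. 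The paper handles this by \emph{resampling} a fresh FK configuration in each such $R_\delta$ (which carries $+$ boundary conditions) and iterating; only after this recursion, with the unexplored regions shrinking below any $\epsilon$, are all outermost loops recovered. Without this iteration you cannot claim convergence of the full outermost collection, let alone its conformal invariance.

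You also misjudge where the difficulty lies in the ``Furthermore'' clause. This is in fact the \emph{easy} part: it follows directly from \cite[Section 4.2, Lemma 18]{benoist-duminil-hongler}, which shows that macroscopic leftmost and rightmost Ising \emph{arcs} can be paired so that their symmetric differences have only small components; since leftmost and rightmost loops are built by gluing the corresponding arcs, the same holds for loops, and any Ising loop sharing an edge with a leftmost loop is trapped between the leftmost/rightmost pair. No FK double-point analysis is needed here, and Remark \ref{rem:ising-loops-subset-dual-fk} is not the relevant input. The double-point analysis of the Appendix is used elsewhere --- to ensure that discrete cut-out domains converge to the continuous ones --- not to compare leftmost with general Ising loops. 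Finally, note that the Sheffield--Werner characterization only yields $\mathrm{CLE}_\kappa$ for some $\kappa\in(\tfrac{8}{3},4]$; you still need an argument (as in the paper, via the $\mathrm{SLE}_3$ local structure of FAE arcs) to pin down $\kappa=3$.
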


The second part of the statement (that will be proved as Lemma \ref{lem:non-left-most-loop}) tells us that in order to understand all Ising loops, it is enough to understand leftmost Ising loops. Indeed, any macroscopic Ising loop $\ell$ is close to a leftmost loop $\ell_{L}$ in a very strong sense: not only are the loops close in the topology of uniform distance up to reparametrization, but the edges they share form a dense subset of each loop.

\begin{rem}
The same result holds for rightmost loops, as the proof will show. 
\end{rem}
The strategy for the proof, illustrated in Figure \ref{fig:fig}, is the following: 
\begin{itemize}
\item We first prove that the collection of level one (i.e. outermost) Ising
loops has a conformally invariant scaling limit (Section \ref{sec:scaling-limit-outermost-loops}). 
\item We then show that this limit consists of loops that are simple, do
not touch the boundary or each other, and satisfy the Markovian restriction
property (Section \ref{sub:qualitative-scaling-limit}). 
\item We then use the characterization of CLE to identify the scaling limit
of the outermost loops as non-nested $\mathrm{CLE}_{3}$ and finally
obtain the convergence of all Ising loops to nested $\mathrm{CLE}_{3}$
(Section \ref{sub:proof-main-thm}).\end{itemize}
\begin{rem}
In \cite{miller-sheffield-werner}, the authors explain how $\mathrm{CLE}_{3}$
can be obtained by performing a percolation
on the $\mathrm{CLE}_{16/3}$ clusters (a procedure analogous to the one used in the discrete to construct the Ising model from the FK model). This approach explains how the joint coupling works
in the continuum and provides a proof scheme for the joint convergence
of Ising and FK loops towards a coupling of CLE$_{16/3}$ and CLE$_{3}$
(our approach does not, as we keep resampling the coupled FK model
to further explore Ising loops). Remark \ref{rem:fktouches} provides
some of the technical tools needed for this joint convergence, but
some further study of the set of discrete FK loops seems needed in
order to get a complete argument. 
\end{rem}
\begin{figure}
\begin{subfigure}{.49\textwidth} \centering \includegraphics[width=1\linewidth]{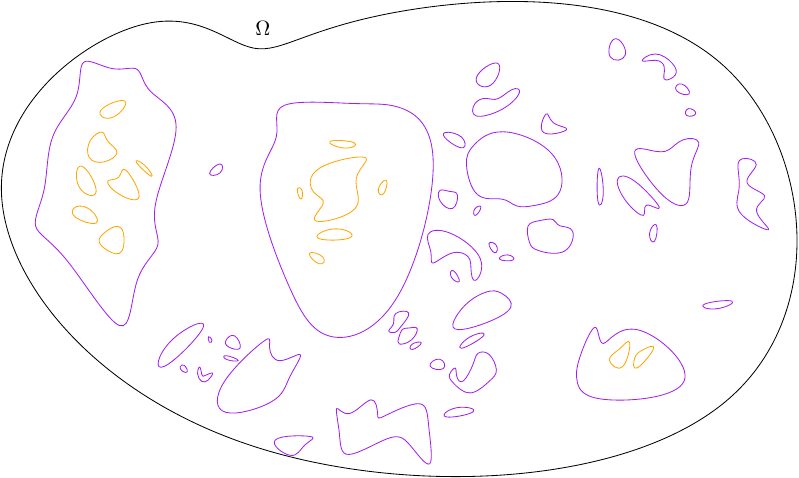}
\caption{The Ising loops to explore (colors for Ising loops: level 1 in purple,
level 2 in orange)}
\label{fig:sfig1} \end{subfigure} \begin{subfigure}{.49\textwidth}
\centering \includegraphics[width=1\linewidth]{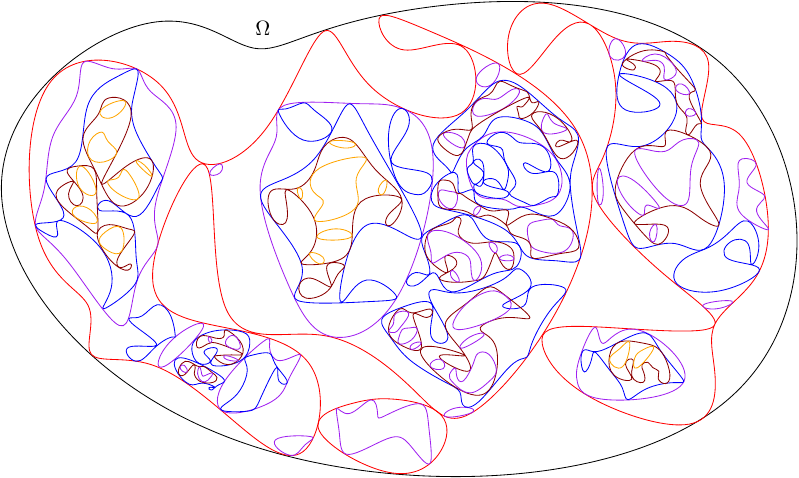}
\caption{Ising loops and the coupled FK (colors for FK loops: level 1 in red,
level 2 in blue, level 3 in brown)}
\label{fig:sfig3} \end{subfigure} \begin{subfigure}{.49\textwidth}
\centering \includegraphics[width=1\linewidth]{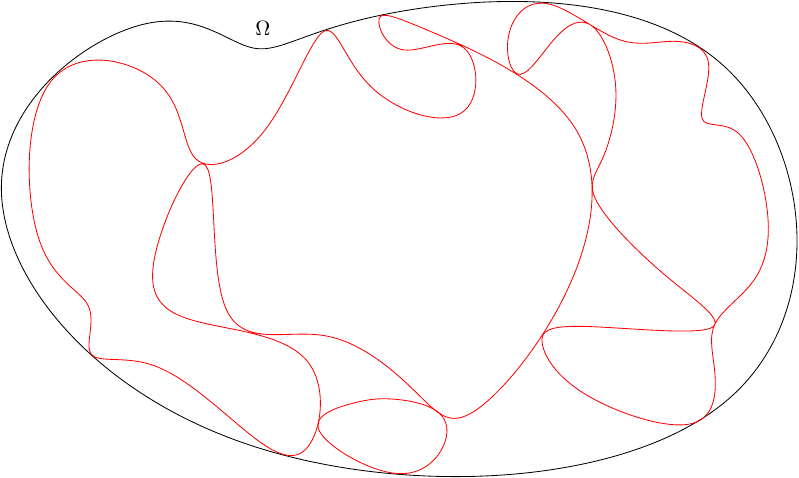}
\caption{We first consider all FK loops of level 1}
\label{fig:sfig4} \end{subfigure} \begin{subfigure}{.49\textwidth}
\centering \includegraphics[width=1\linewidth]{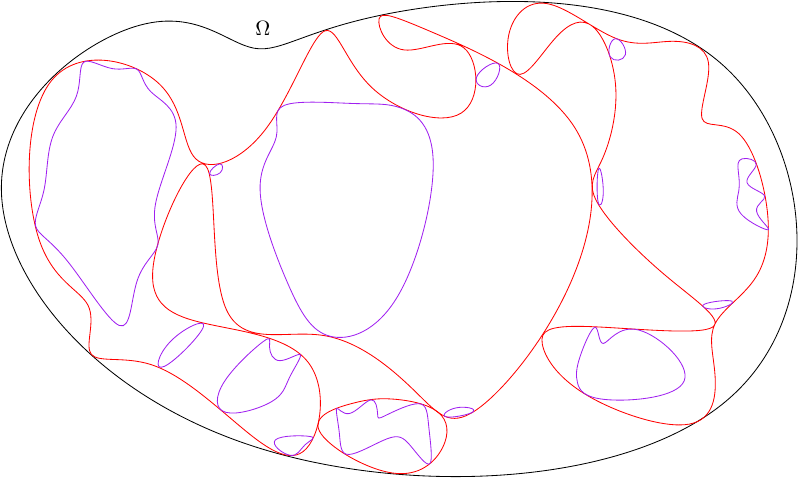}
\caption{Then, we find all Ising loops (of level 1) that touch the explored
FK loops}
\label{fig:sfig5} \end{subfigure} \begin{subfigure}{.49\textwidth}
\centering \includegraphics[width=1\linewidth]{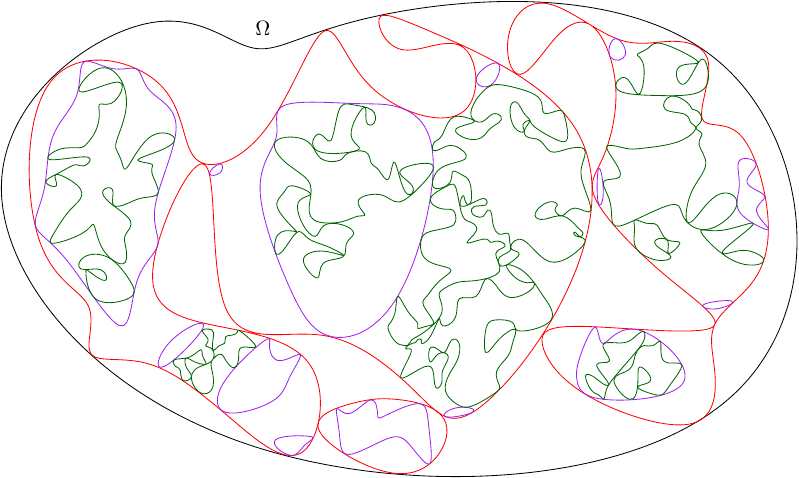}
\caption{We resample the FK loops (in green) in the domains bounded by the
Ising loops we have found}
\label{fig:sfig6} \end{subfigure} \begin{subfigure}{.49\textwidth}
\centering \includegraphics[width=1\linewidth]{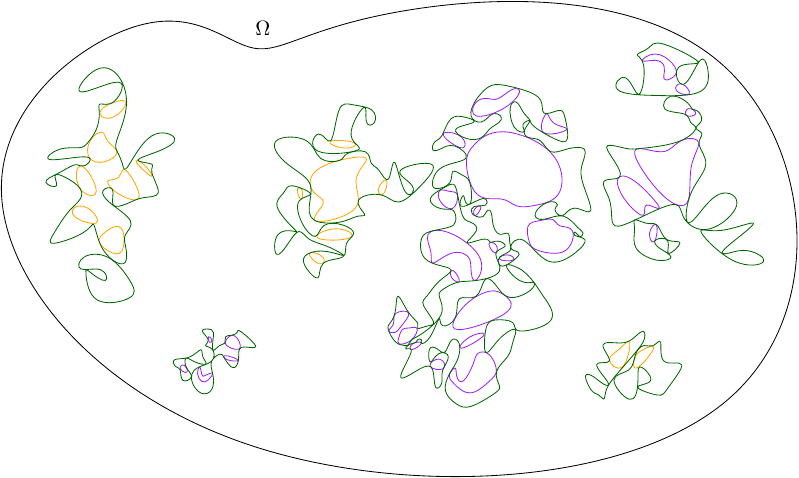}
\caption{We find all Ising loops (of level 1 and 2) that touch the resampled
outermost FK loops}
\label{fig:sfig7} \end{subfigure} \begin{subfigure}{.49\textwidth}
\centering \includegraphics[width=1\linewidth]{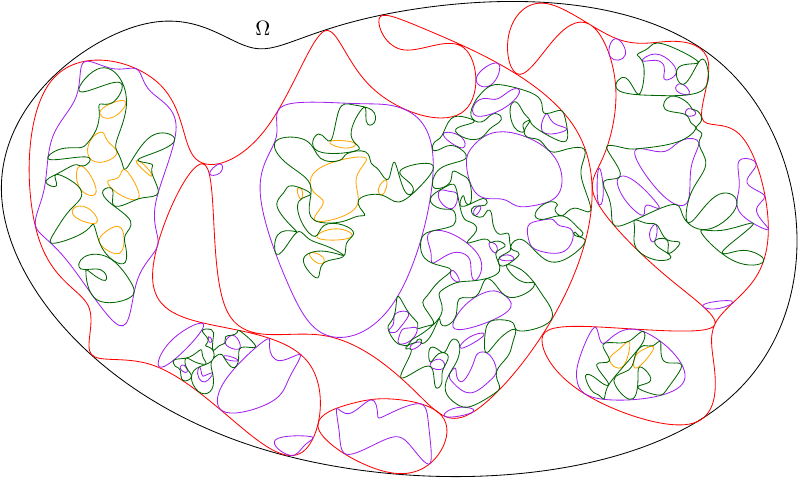}
\caption{All the loops used in the exploration process}
\label{fig:sfig8} \end{subfigure} \caption{The exploration scheme}
\label{fig:fig} 
\end{figure}

\section{\label{sec:scaling-limits-ising-and-fk-ising-interfaces}Scaling
Limits of Ising and FK Interfaces}

In this section, we state two results on which our proof relies: first,
the identification of the scaling limit of the free boundary conditions
arc for the Ising model and second, the conformal invariance of the
scaling limit of the FK interface loops.

\subsection{Ising Free Arc Ensembles}

The first result that we need is the identification of the scaling
limit of the Ising arcs that arise with free boundary conditions.
For the Ising model on a discrete domain $\mathcal{G}$ with free
boundary conditions, we call an \emph{Ising arc} a spin interface
that links two boundary points. In the continuous, we refer to the
set of arcs produced by a branching SLE$_{3}(-\frac{3}{2},-\frac{3}{2})$
as the Free Arc Ensemble (FAE) \cite{benoist-duminil-hongler}. 
\begin{thm}[{{{\cite[Theorem 6]{benoist-duminil-hongler}}}}]
\label{thm:ising-main-theorem} Consider the critical Ising model
on a discretization $\left(\Omega_{\delta}\right)_{\delta>0}$ of
a Jordan domain $\Omega$, with free boundary conditions. Then as
$\delta\to0$, the set of all Ising arcs converges in law to the Free
Arc Ensemble (for the Hausdorff metric on sets of curves, where curves
are equipped with the topology of uniform convergence up to reparametrization). 
\end{thm}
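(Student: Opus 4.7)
The plan is to explore the collection of Ising arcs through a branching tree rooted at the boundary, identify each explored branch as a discrete analogue of an $\mathrm{SLE}_3(-\tfrac{3}{2},-\tfrac{3}{2})$ curve, and then pass to the limit both branch-by-branch and globally. Concretely, I would first mark a boundary point $a\in\partial\Omega$ and run a discrete exploration process that, at each step, follows the unique Ising arc incident to the current tip. Since free boundary conditions make the endpoint of this arc random on $\partial\Omega$, I would recast the situation \emph{\`a la} Kemppainen--Smirnov by tracking a branching exploration: at each boundary vertex, one decides whether to continue the current arc or to branch into the slit domain obtained by chopping off an already-completed arc. The remaining arcs then live in the complementary components, still with free boundary conditions, so the process can be iterated.

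Single-branch convergence is the first technical step. I would set up a discrete fermionic / holomorphic observable $F_\delta$ adapted to free boundary conditions, as in the Hongler--Smirnov and Chelkak--Smirnov framework, normalized so that it is a discrete martingale for the exploration. Using the discrete Riemann boundary value problem satisfied by $F_\delta$ and the a priori precompactness criteria of \cite{kemppainen-smirnov} (crossing estimates inherited from RSW-type bounds for the self-dual FK-Ising model), I would extract subsequential limits and identify the Loewner driving function. The boundary behavior of the limiting observable near the two marked points---where the Ising boundary condition transitions from free to the explored slit---forces weights $-\tfrac{3}{2}$ at each force point, giving the chordal $\mathrm{SLE}_3(-\tfrac{3}{2},-\tfrac{3}{2})$ with force points located at the two neighbors of the tip on the slit boundary.

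The second step is to promote this to convergence of the entire branching tree, and from there to convergence of the full arc collection in the Hausdorff topology on sets of curves. The domain Markov property of the Ising model guarantees that, conditionally on the already-explored branches, the restriction to each residual simply-connected component is again a free boundary conditions critical Ising model, matching exactly the target branching rule of the SLE$_3(-\tfrac{3}{2},-\tfrac{3}{2})$ process that defines the FAE. Tightness in the Hausdorff topology on the space of arc collections would follow from uniform crossing estimates (no macroscopic arc can be missed, because such an arc has macroscopic endpoints on $\partial\Omega$ and is therefore reached by the exploration after finitely many branchings). Combined with the branch-wise identification above, this yields the desired convergence to the FAE.

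The main obstacle will be the passage from convergence of the exploration tree at the level of finitely many branches to convergence of the \emph{whole} collection. One needs a quantitative estimate ruling out an accumulation of unresolved macroscopic arcs deep in the branching tree: for every $\epsilon>0$ the number of steps required before all arcs of diameter $\geq\epsilon$ are explored must be tight as $\delta\to 0$. I would establish this by combining uniform Ising arm exponents (or at least one-arm-type bounds) with the observation that at each branching the remaining domain shrinks in a quantitative harmonic-measure sense, so that only finitely many branchings can support a macroscopic residual arc. Secondary difficulties include verifying the precise $-\tfrac{3}{2}$ boundary weights, which I would handle by carefully matching the asymptotics of $F_\delta$ at the two marked boundary points with the analogous continuous observable for $\mathrm{SLE}_3(-\tfrac{3}{2},-\tfrac{3}{2})$, and checking that the Hausdorff topology is compatible with the branchwise convergence obtained above.
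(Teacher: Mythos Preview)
This theorem is not proved in the present paper: it is quoted verbatim as \cite[Theorem 6]{benoist-duminil-hongler} and used as a black box (see Section~\ref{sec:scaling-limits-ising-and-fk-ising-interfaces} and the proof of Lemma~\ref{lem:outermost-loops-have-limit}). There is therefore no ``paper's own proof'' to compare against here; the paper only records the statement and the one-line description in the introduction that the FAE arises from ``an exploration tree \ldots\ made of a bouncing and branching version of the dipolar $\mathrm{SLE}_3$ process''.

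That said, your outline is broadly consistent with what \cite{benoist-duminil-hongler} does and with the analogous FK argument carried out in the Appendix (Lemma~\ref{lem:cvFKinterface}). One point of divergence worth flagging: the identification of a single branch as $\mathrm{SLE}_3(-\tfrac32,-\tfrac32)$ in \cite{benoist-duminil-hongler} is not obtained by building a new martingale observable tailored to free boundary conditions and reading off the weights from its boundary asymptotics, as you propose. Instead, one first gets tightness from the crossing estimates of \cite{chelkak-duminil-hongler} via \cite{kemppainen-smirnov}, then observes that whenever the tip is macroscopically away from the free boundary the configuration in the remaining domain has genuine Dobrushin boundary conditions, so \cite[Theorem~2]{chelkak-duminil-hongler-kemppainen-smirnov} identifies the excursions as chordal $\mathrm{SLE}_3$; a Bessel-process argument (instantaneous reflection plus H\"older control of the driving term, exactly as in the proof of Lemma~\ref{lem:cvFKinterface} here) then pins down the force points and yields $\mathrm{SLE}_3(-\tfrac32,-\tfrac32)$. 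Your observable route is plausible in principle but would require substantial new input (the free-boundary observables of \cite{hongler-kytola,izyurov} are set up for fixed marked points, not for a process whose force points slide along the boundary), whereas the excursion-plus-Bessel argument reduces everything to the already-established Dobrushin convergence.
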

As explained in \cite{benoist-duminil-hongler}, the scaling limits
of the interfaces linking pairs of boundary points, and hence boundary touching loops, can be deterministically recovered by gluing
the FAE arcs. These two sets of curves (the arcs on the one hand and
the scaling limit of the boundary touching loops) contain the same data in the continuous.

Let us describe how boundary touching loops can be recovered from Ising arcs. We formally put $+$ spins on the boundary of a discrete domain $\mathcal{G}$ and consider the free Ising model inside of $\mathcal{G}$: the formal boundary spins do not interact, but they play a role in determining what we call an Ising interface. Given a spin configuration $\sigma$, we will consider the \emph{spin-flip} of $\sigma$ which is the spin configuration obtained by switching the value of all the spins inside of $\mathcal{G}$, except for the formal boundary spins that stay at their fixed $+$ value.

Given a face $z\in \mathcal{G}$ and an edge $b$ on the boundary of $\mathcal{G}$ (i.e., an edge that lies between a fixed $+$ spin and a free spin), let  $\mathcal{A}(z,b)$ be the set of all the leftmost
Ising arcs (i.e., leftmost Ising interfaces joining two points of the boundary of $\mathcal{G}$) that separate $z$ from $b$ in $\mathcal{G}$. The proximity to $z$ gives a natural ordering of the set $\mathcal{A}(z,b)$: given two distinct arcs of $\mathcal{A}(z,b)$, one of them is always closer to $z$, in the sense that it separates the other arc from $z$. When the set $\mathcal{A}(z,b)$ is non-empty, we define $a(z,b)$ to be the arc closest to $z$ among all elements of this set, otherwise, we let $a(z,b)=b$. We call the \emph{inside} (resp. the \emph{outside}) of $a(z,b)$ the set of spins neighboring the arc $a(z,b)$ on the side of $z$ (resp. on the side of $b$). 

\begin{figure}
\centering \includegraphics[width=8cm]{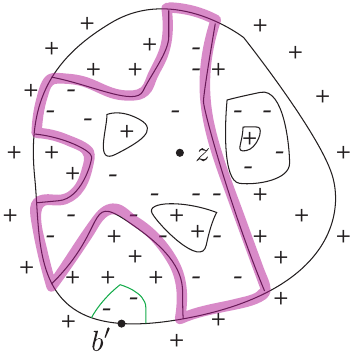}
\caption{
The arcs and loops of an Ising configuration in $ \mathcal{G} $. The path outlined  in purple is the concatenation of the arcs $ a (z,b) $ over all the boundary edges $ b $, as in Lemma \ref{lem:arcsandloops}. The green arc belongs to  $\mathcal{A}(z,b')$ but is not the arc $a (z,b') $ .}
\label{fig:cle-loops-from-arcs}
\end{figure}

We now explain how to recover from the Ising arcs the set $\mathcal{L}\left(\mathcal{G}\right)$ of all the leftmost Ising loops that touch the boundary of $\mathcal{G}$ (see also Figure \ref{fig:FAE}). Note that this construction relies on the formal $ + $ spins on the boundary of $ \mathcal{G} $.
\begin{lem}\label{lem:arcsandloops}
For any spin configuration $ \sigma $ and any face $z\in\mathcal{G}$, denoting by $ \ell $ the concatenation of the arcs $ a (z, b) $ over all the boundary edges $ b $ (see Figure \ref{fig:cle-loops-from-arcs}), we have
\begin{itemize}
\item Either, for any boundary edge $b$, the inside of $a(z,b)$ consists of $-$ spins only. In that case $ \ell $ is a loop in $\mathcal{L}\left(\mathcal{G}\right)$.
\item Or, for any boundary edge $b$, the inside of $a(z,b)$ consists of $+$ spins only. In that case, $\ell$ bounds a connected component $R$ of
\[
\widetilde{\mathcal{G}}= \mathcal{G} \setminus \bigcup_{\gamma\in\mathcal{L}(\mathcal{G})} \gamma
\]
that is not surrounded by a loop in $\mathcal{L}(\mathcal{G})$. Moreover, $\ell$ is an Ising loop for the spin-flipped configuration.
\end{itemize}
\end{lem}

\begin{proof}
Let us fix $z\in \mathcal{G}$, and assume that there exists an edge $b'$ such that $a(z,b')$ carries $-$ spins on its inside. Then the (weak) connected component $\mathcal{G}'$ of $-$ spins attached to the arc $a(z,b')$ disconnects $z$ from the boundary (as the arc $a(z,b')$ is extremal). Hence the outer boundary of $\mathcal{G}'$ consists of the arcs $a(z,b)$ where $b$ ranges over all boundary edges, which forces these arcs have to carry $-$ spins on their inside. 

If the face $z$ is such that the above assumption does not hold, then all the edges of $a(z,b)$ carry $+$ spins on their inside. It is straightforward that their concatenation bounds a connected component $R$ of $\widetilde{\mathcal{G}}$ that is not surrounded by a loop in $\mathcal{L}(\mathcal{G})$. Note that given an edge $b$ such that the set $\mathcal{A}(z,b)$ is non-empty, the arc $a(z,b)$ carry $-$ spins on its outside and $+$ spins on its inside, whereas if $\mathcal{A}(z,b)$ is empty, $a(z,b)=b$ is a boundary edge lying between two $+$ spins. If we flip all the spins in the interior of $\mathcal{G}$, any edge of the boundary of $R$ will carry $-$ spins on its inside and $+$ spins on its outside. In other words the boundary of the region $R$ is an Ising loop for the spin-flipped configuration.
\end{proof}

\subsection{Conformal Invariance of FK-Ising Interfaces and Cut-Out Domains }

For the proof of our main theorem, we need the following result, which
is closely related to (but independent of) the result of Kemppainen
and Smirnov about the scaling limit of the boundary-touching FK loops
in smooth domains \cite{kemppainen-smirnov-ii}. This result includes
in particular the convergence of FK cut-out domains to \emph{continuous
cut-out} \emph{domains}, defined as the maximal domains contained
inside the scaling limit of FK loops. 
\begin{prop}
\label{prop:fk-loops-scaling-limit}Consider the critical FK-Ising
loops on a discretization $\left(\Omega_{\delta}\right)_{\delta}$
of a Jordan domain $\Omega$, with wired boundary conditions. The
FK loops have a conformally invariant scaling limit (in law, with respect to $d_\mathcal{X}$) as the mesh size
$\delta\to0$. This scaling limit is almost surely a non-crossing collection. Furthermore the discrete cut-out domains of level one FK loops converge to
the cut-out domains of the outermost loops of this scaling limit. \end{prop}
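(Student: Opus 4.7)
\textbf{Proof proposal for Proposition \ref{prop:fk-loops-scaling-limit}.} The plan is to obtain the scaling limit of the FK loops by first extending the convergence of boundary-touching loops from \cite{kemppainen-smirnov-ii} to arbitrary Jordan domains, and then iterating the construction inside each cut-out domain. The control of the cut-out domains themselves in the scaling limit is the most delicate step and will be handled, via a separate study of double points of the limiting FK loops, in the appendix.

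First, I would set up a branching exploration of the FK loops touching $\partial\Omega_{\delta}$, in the spirit of \cite{kemppainen-smirnov-ii}. Each individual chord of such an exploration can be realized by flipping a short boundary arc to free and running the ensuing Dobrushin FK interface, which converges to chordal $\mathrm{SLE}_{16/3}$ by \cite{chelkak-duminil-hongler-kemppainen-smirnov}. Precompactness of the whole branching exploration, with arbitrary starting point and branching structure, follows from the Kemppainen-Smirnov criteria, which are satisfied for critical FK-Ising thanks to the uniform RSW estimates available for this model. Compatibility under conformal maps of all finite-dimensional marginals, inherited from the conformal invariance of chordal $\mathrm{SLE}_{16/3}$, then upgrades the subsequential limit to a genuine limit and gives a conformally invariant scaling limit of the boundary-touching FK loops in any Jordan domain.

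Next, the non-boundary-touching FK loops are obtained by iterating this construction. By the spatial Markov property for FK at the lattice level, conditionally on the outermost FK loops the model inside each cut-out domain is again a critical FK-Ising with wired boundary conditions. Granting the convergence of cut-out domains stated in the proposition, one can apply the previous step to each of them. A tail bound again coming from RSW (with high probability only finitely many cut-out domains of diameter above $\epsilon$) makes an induction on levels well-defined and yields the convergence of the entire FK loop ensemble in $d_{\mathcal{X}}$, with conformal invariance inherited level by level from $\mathrm{SLE}_{16/3}$.

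The main obstacle is the step postponed above: that the discrete cut-out domains converge to the connected components of the complement of the limiting outermost loops. The subtlety is that the limiting loops are $\mathrm{SLE}_{16/3}$-like with $\kappa>4$, hence self-touching, whereas the discrete FK loops are simple. One must therefore show that every near-pinch of a discrete loop corresponds to an actual double point in the scaling limit, and conversely that every continuous double point is approximated by discrete near-pinches; otherwise a single discrete pocket could split in the limit, or several discrete pockets could merge. Establishing this correspondence between discrete near self-approaches and continuous double points is precisely the work carried out in the appendix, via the Kemppainen-Smirnov framework \cite{kemppainen-smirnov} together with second-moment estimates bounding the number of $\epsilon$-close self-approaches at each scale. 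Once this matching is in place, convergence of the discrete cut-out domains to the continuous ones (in the sense needed to iterate the exploration) follows from the uniform convergence of the loops themselves.
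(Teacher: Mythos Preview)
Your outline is broadly workable and you correctly locate the crux (matching discrete near-pinches with continuous double points), but the paper takes a rather different route and a couple of your statements are imprecise.

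The paper does \emph{not} build on the branching tree of \cite{kemppainen-smirnov-ii}; it deliberately gives an independent argument valid in arbitrary Jordan domains (whereas \cite{kemppainen-smirnov-ii} assumes smoothness). Instead of a branching exploration, it runs a single FK exploration $\gamma_\delta$ from $a_\delta$ to $b_\delta$ in the \emph{wired} domain and proves from scratch (Lemma~\ref{lem:cvFKinterface}) that it converges to $\mathrm{SLE}_{16/3}(-2/3)$, via a Bessel identification of the driving pair $(U_t,O_t)$ in the style of \cite{benoist-duminil-hongler}. Note in passing that your ``flip a short arc to free and get chordal $\mathrm{SLE}_{16/3}$'' is not quite right: an exploration launched from wired boundary carries a force point, and the limit is $\mathrm{SLE}_{16/3}(-2/3)$. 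After this primary exploration the paper follows the Camia--Newman scheme \cite{camia-newman-ii}: in each complementary component with \emph{mixed} boundary conditions one runs a secondary Dobrushin interface (this one genuinely chordal $\mathrm{SLE}_{16/3}$ by \cite{chelkak-duminil-hongler-kemppainen-smirnov}) to close an outermost FK loop, then iterates in the leftover \emph{wired} components (Proposition~\ref{prop:cvallFK}). So the recursion is over wired subdomains produced by the exploration, not over cut-out domains of already-discovered loops as you propose. For the double-point step, the paper does not use second-moment bounds; it builds explicit finite $\epsilon$-pinching families along $\gamma_\delta$ and, for each candidate pinch, runs an RSW crossing argument in a small quad (Proposition~\ref{prop:fktouches}), using the six-arm bound of \cite{chelkak-duminil-hongler} to rule out triple points. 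What the paper's approach buys is independence from \cite{kemppainen-smirnov-ii} and no boundary regularity hypothesis; what yours would buy, if the extension of \cite{kemppainen-smirnov-ii} to rough boundaries and a concrete double-point argument were supplied, is a shorter path that recycles existing machinery.
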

\begin{proof}
The first part is proven as Proposition \ref{prop:cvallFK} in the
Appendix and the second part follows from Remark \ref{rem:fktouches}
just after. 
\end{proof}

\section{\label{sec:scaling-limit-outermost-loops}Scaling Limit of Ising
Outermost Loops }

We start with a technical lemma to control the diameters of Ising loops: we show it is impossible to find an arbitrarily large collection of macroscopic Ising loops.

Given a collection $\mathcal{C}$ of loops, let $\underline{d}_{\mathcal{C}}$ (resp. $\overline{d}_{\mathcal{C}}$) denote the infimum (resp. supremum) of the diameters of the loops in $\mathcal{C}$. Consider the critical Ising model with $+$
boundary conditions on a discretization $\left(\Omega_{\delta}\right)_{\delta>0}$ of
a Jordan domain $\Omega$. For any integer $n\geq 1$, we let $\underline{d}_n$ denote the supremum of the quantities $\underline{d}_{\mathcal{C}}$, where $\mathcal{C}$ ranges over all collections of $n$ disjoint Ising loops.

\begin{lem}\label{lem:finitelymanybigloops}
The quantity $\underline{d}_n$ converges in probability to $0$ as $n\to\infty$, uniformly in the mesh size $\delta$:
\[
\forall \varepsilon_1,\varepsilon_2>0,\ \exists n_0\in\mathbb{N},\ \forall \delta>0,\ \forall n\geq n_0,\  \mathbb{P}\left[\underline{d}_n\geq \varepsilon_1 \right] \leq \varepsilon_2.
\]
\end{lem}

\begin{proof}
By contradiction, if this were not to the case, we could find $\varepsilon_1, \varepsilon_2>0$ such that for any integer $n$, we could find a mesh size $\delta_n$ such, with probability at least $\varepsilon_2$, that there would be a collection $\mathcal{C}$ of $n$ disjoint Ising loops such that $\underline{d}_{\mathcal{C}} > \varepsilon_1$. Note that $\delta_n\to 0$ as, for each fixed $\delta$, one can draw only finitely many simple loops on the graph $\Omega_\delta$.

Given any scale $\eta<\varepsilon_1$, we can find a finite collection of annuli of inner radius $\eta$ and outer radius $\varepsilon_1$ such that the domain $\Omega$ is covered by the balls of radius $\eta$ at the center of these annuli. Moreover, we can pick such a covering collection by using a number $C \eta^{-2}$ of annuli, where $C$ is a constant that depends on the domain $\Omega$ but not on $\eta$. Each Ising loop has to intersect the inside of at least one of these annuli (as they form a cover of our domain), and so each Ising loop of diameter larger than $\varepsilon_1$ forces the existence of an Ising interface crossing in at least one of these annuli. In turn, the existence of more than $C \eta^{-2} (N-1) + 1$ disjoint Ising loops of diameter larger than $\varepsilon_1$ implies that one of the annulus in the cover contains at least $N$ disjoint Ising interface crossings.

As Ising interfaces form a subset of the FK dual configuration through the Edwards-Sokal coupling, FK dual paths provide similar crossings: for any integer $N$, for any scale $\eta<\varepsilon_1$, we can find a mesh size $\delta<\eta$ such that with probability at least $\varepsilon_2$, there exists an annulus of inner radius $\eta$ and outer radius $\varepsilon_1$ which is crossed by $N$ disjoint dual FK arms.

However \cite[Lemma 5.7]{chelkak-duminil-hongler} (via the use of quasi-multiplicativity \cite[Theorem 1.3]{chelkak-duminil-hongler} to compute arm exponents) implies that FK $N$-arm monochromatic (i.e., all arms are primal or all arms are dual) exponents are larger than $2$ for $N$ large enough (note that, as FK measures are positively correlated, these exponents also provide upper bounds on dual crossing of annuli that intersect the wired boundary of our domain $\Omega$). Hence we can find an integer $N$, such that, provided that the scale $\eta$ is small enough, the probability that at least one annuli among $C \eta^{-2}$ is crossed by $N$ disjoint dual FK arms is arbitrarily small, uniformly in the mesh size $\delta$. This yields a contradiction, and so the quantity $\underline{d}_n$ converges to $0$ as claimed.
\end{proof}

Let us now argue that the outermost Ising loops have a conformally
invariant scaling limit. Consider the critical Ising model with $+$
boundary conditions coupled with an FK model with wired boundary conditions
on a discretization $\left(\Omega_{\delta}\right)_{\delta>0}$ of
a Jordan domain $\Omega$. 
\begin{lem}
\label{lem:outermost-loops-have-limit} As the mesh size $\delta\to0$,
the leftmost level one Ising loops converge in law with respect to
the topology generated by $d_{\mathcal{X}}$ to a conformally invariant
scaling limit.

Furthermore, in the scaling limit, the Ising loops are contained in
the cut-out domains of the outermost FK loops. \end{lem}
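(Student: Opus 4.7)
The plan is to decompose the problem into two conditionally independent layers by using the FK coupling: first reveal the FK loops and the cut-out domains, and then, conditionally on these, discover the Ising loops from the free-BC Ising model induced inside each cut-out. Reassembling the two layers should produce the level-one leftmost Ising loops of the global $+$ BC model, and passing to the limit layer by layer should give both the scaling limit and its conformal invariance.

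First, by Proposition~\ref{prop:fk-loops-scaling-limit}, as $\delta\to 0$ the outermost FK loops with wired BC converge to a conformally invariant collection of loops, and the discrete cut-out domains converge to the continuous cut-out domains $\{U_\alpha\}$. Second, by the Edwards--Sokal coupling and the spatial Markov property of the FK model, conditionally on the FK configuration the Ising model inside each discrete cut-out is a critical Ising model with free boundary conditions, with mutual independence across cut-outs. Theorem~\ref{thm:ising-main-theorem} then applies inside each $U_\alpha$, and jointly over the cut-outs by this conditional independence, giving convergence of the free Ising arcs to independent copies of the FAE inside each continuous cut-out.

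The third step is the reconstruction of the leftmost level-one Ising loops from the pair (cut-outs, free arcs). A leftmost level-one loop of the global $+$ BC model is either (i) an internal leftmost Ising loop of the free-BC model inside a cut-out (when the enclosed $-$ region does not touch the cut-out boundary), or (ii) the concatenation of a free arc with the portion of the cut-out boundary lying on its $-$ side. The $\pm$ labelling of the arc-delimited intervals of the cut-out boundary is the spin assignment of the FK cluster of each boundary face, and passes to a measurable limiting labelling built from the FAE together with independent fair coins. The desired convergence in $d_{\mathcal{X}}$ and conformal invariance should then follow by composing the convergence of the cut-outs and of the FAE with this reconstruction map, which is continuous off a null set in the limiting configuration; the finiteness of macroscopic loops in any compact is inherited from the finiteness of macroscopic cut-outs and macroscopic FAE arcs. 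Containment of the limiting loops in the continuous cut-out domains is immediate from the construction and is the continuum counterpart of Remark~\ref{rem:ising-loops-subset-dual-fk}.

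The main obstacle is the continuity of the reconstruction map: microscopic perturbations of free-arc endpoints on the cut-out boundary must not create or destroy macroscopic glued loops. This is where the double-point analysis of discrete FK loops carried out in the Appendix is crucial, as it controls the geometry of the cut-out boundary where free arcs attach; together with regularity estimates of Kemppainen--Smirnov type it rules out pathological crowding of FAE arc endpoints and provides the Hausdorff control needed to match discrete and continuous glued loops in the metric $d_{\mathcal{X}}$.
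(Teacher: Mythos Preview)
Your two-layer decomposition is the right starting point, but the reconstruction step contains two genuine gaps.

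First, your description (ii) is incorrect: an Ising loop is a curve on the dual lattice, while the boundary of an FK cut-out domain lives on the bi-medial lattice; no Ising loop contains a piece of the cut-out boundary. A leftmost level-one loop that touches $\partial C_\delta$ is not ``an arc plus a boundary segment'' but rather a concatenation of \emph{several} FAE arcs. Concretely, for an interior point $z$ one collects, for every boundary point $b$, the innermost leftmost arc separating $z$ from $b$; when all of these carry $-$ on the side of $z$, their union is the loop around $z$. This is the gluing rule the paper actually uses, and it is the one for which continuity off a null set must be argued.

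Second, and more seriously, your case (i) is not accessible from the data you have. Theorem~\ref{thm:ising-main-theorem} gives you only the boundary-touching \emph{arcs} of the free-BC model in each cut-out; it says nothing about leftmost loops of that free-BC model that do not touch $\partial C_\delta$. Such loops exist (they sit in the regions $R_\delta$ outside the boundary-touching loops $\mathcal{L}(C_\delta)$, where the induced boundary condition is again $+$), and they are leftmost level-one loops of the global $+$ model. A single pass through (cut-outs, FAE) does not see them. The paper resolves this by \emph{iterating}: in each $R_\delta$ one resamples a fresh FK representation with wired boundary conditions, explores its cut-outs, glues a new batch of FAE arcs, and repeats; after finitely many rounds all remaining unexplored regions are uniformly small, so every macroscopic level-one loop has been harvested. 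Without this iteration your argument simply does not produce all the loops whose convergence is claimed.
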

\begin{proof}
We are going to describe the collection $\mathcal{L}$ of leftmost level one Ising loops iteratively as a countable union of loop collections $\mathcal{L}=\cup_n \mathcal{L}_n$. Convergence will follow from the a.s. convergence of each of the loop collections $\mathcal{L}_n$, as well as from the fact that the supremum of the diameter of the loops in $\mathcal{L}_n$ goes to $0$ in probability as $n\to\infty$, uniformly in $\delta$.

\begin{figure}
\centering \includegraphics[width=9cm]{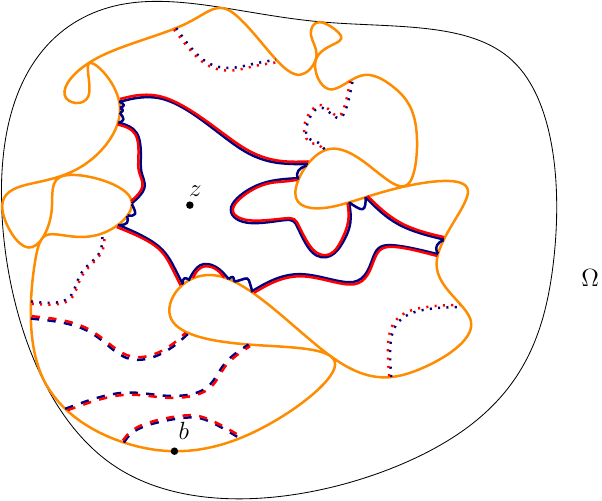}
\caption{In orange, an FK loop touching the boundary of the domain $\Omega$: the corresponding primal FK cluster carries $+$ spins. In red and blue, paths of Ising $+$ and $-$ spins respectively. The plain paths correspond to the collection of arcs $a(z,b)$, that together draw an Ising loop of level 1 surrounding $z$. In dashed, a few arcs belonging to $\mathcal{A}(z,b)$.}
\label{fig:FAE}
\end{figure}

Let us first describe $\mathcal{L}_1$ (see Figure \ref{fig:FAE}).  We start by conditioning on the outermost FK loops in $\Omega_{\delta}$. Let us denote by $\mathcal{C}_1$
the associated set of discrete cut-out domains in $\Omega_{\delta}$ (see Section \ref{sec:nested}). Any cut-out domain $C_{\delta}\in\mathcal{C}_1$ satisfies the following: it is bordered
on its outside by a strong path of $+$ and, conditionally on $C_{\delta}$,
the Ising spins inside of $C_{\delta}$ have free boundary conditions. Let us call $\mathcal{L}\left(C_{\delta}\right)$ the set of leftmost Ising loops in $C_{\delta}$ that touch the boundary of $C_{\delta}$ (note that we are in the setup of Lemma \ref{lem:arcsandloops}).

Now, consider the loop collection
\[
\mathcal{L}_1=\bigcup_{C_{\delta}\in\mathcal{C}_1} \mathcal{L}\left(C_{\delta}\right),
\]
where the union is over all cut-out domains $C_{\delta}\in\mathcal{C}_1$. We can pass the construction of $\mathcal{L}_1$ to the scaling limit. By Proposition \ref{prop:fk-loops-scaling-limit}, the discrete cut-out
domains $C_{\delta}\in\mathcal{C}_1$ converge to the continuous cut-out domains as $\delta\to 0$.
Moreover, for any cut-out domain $C=\lim C_{\delta}$, the Ising arc ensemble
in $C_{\delta}$ converges to the (conformally invariant) Free Arc
Ensemble in $C$ as $\delta\to0$ (Theorem \ref{thm:ising-main-theorem}). Hence, the boundary touching loops $\mathcal{L}\left(C_{\delta}\right)$ converge (via the correspondence between arcs and loops explained in Lemma \ref{lem:arcsandloops}). Consequently, the collection $\mathcal{L}_1$ has a conformally invariant scaling limit.

Recall now that any Ising loop is contained inside one of the cut-out domains $C_{\delta}\in\mathcal{C}_1$ (Remark \ref{rem:ising-loops-subset-dual-fk}). For such a cut-out domain $C_{\delta}$,
we have that the loops of $\mathcal{L}_1$ further
cut $C_{\delta}$ in regions of two types (this is the dichotomy of Lemma \ref{lem:arcsandloops}): 
\begin{itemize}
\item The regions enclosed by the loops of $\mathcal{L}_1$
(each loop in $\mathcal{L}_1$ separates
an inner weak circuit of $-$ on its inside and a strong circuit of
$+$ on its outside).
\item The regions that
are outside the loops of $\mathcal{L}_1$ (these
regions have strong $+$ boundary conditions). Let us denote by $\mathcal{R}\left(C_{\delta}\right)$ the set of these regions.
\end{itemize}
A loop $\ell_{\delta}$ that is inside of $C_{\delta}$ hence falls into one of three categories: 
\begin{itemize}
\item The loop $\ell_{\delta}$ is strictly contained inside of a loop of
$\mathcal{L}_1$: in this case, it is of level
two or higher (i.e. it is not outermost). 
\item The loop $\ell_{\delta}$ is contained inside of a loop $L_{\delta}\in\mathcal{L}_1$
and it shares an edge with $L_{\delta}$ (and hence is of level one,
but not leftmost if it is distinct from $L_{\delta}$). The structure
of these loops is described in Lemma \ref{lem:non-left-most-loop}
below. 
\item The loop $\ell_{\delta}$ is strictly contained in one of the regions in $\mathcal{R}\left(C_{\delta}\right)$. 
\end{itemize}
In order to find the remaining leftmost level 1 Ising loops (i.e., the collection $\mathcal{L}\setminus\mathcal{L}_1$) we hence just
need to look inside the regions in $\mathcal{R}\left(C_{\delta}\right)$.

Let us call 
\[
\mathcal{R}_2=\bigcup_{C_{\delta}\in\mathcal{C}_1} \mathcal{R}\left(C_{\delta}\right).  
\]
Any region $C_{\delta}\in\mathcal{R}_2$ carries strong $+$ boundary conditions, and its boundary is connected by a strong path of $+$ to the boundary of $\Omega_\delta$, so any leftmost level 1 Ising loop in $C_{\delta}$ is also a leftmost level 1 Ising loop of $\Omega_\delta$.

By resampling an FK representation with wired boundary conditions of the Ising model in each of the domains $C_{\delta}$, we can take the construction we just did for the unique region of $\mathcal{R}_1=\{\Omega_\delta\}$ (that yielded the loop collections $\mathcal{L}_1$) and apply this construction to each of the regions in $\mathcal{R}_2$. In particular, we construct a collection $\mathcal{C}_2$ of cut-out domains associated to the outermost FK loops in each region in $\mathcal{R}_2$, and use these to obtain a collection of leftmost level 1 Ising loops $\mathcal{L}_2$, as well as a set of smaller regions $\mathcal{R}_3$ with $+$ boundary conditions, that contain all the leftmost level 1 Ising loops that were neither in $\mathcal{L}_1$ nor in $\mathcal{L}_2$. We further iterate until all loops are found, so that $\mathcal{L}=\cup_n \mathcal{L}_n$. For each fixed $n$, the loop collection $\mathcal{L}_n$ has a conformally invariant scaling limit, for the same reason that $\mathcal{L}_1$ has.

To deduce the convergence of $\mathcal{L}=\cup_n \mathcal{L}_n$ from the convergence of the terms $\mathcal{L}_n$, we need them to uniformly converge in some sense: we will show that macroscopic Ising loops cannot belong to $\mathcal{L}_n$ for $n$ arbitrarily large. More precisely, we will now show that the quantity $\overline{d}_{\mathcal{L}_n}$ (the diameter of the largest loop in $\mathcal{L}_n$) tends to $0$ in probability as $n\to \infty$, uniformly in $\delta$. As the loops of $\mathcal{L}_n$ are contained in the domains belonging to $\mathcal{R}_n$, it is enough to show that the supremum $\overline{d}_{\mathcal{R}_n}$ of the diameters of the elements of $\mathcal{R}_n$ goes to $0$ in probability as $n\to \infty$, uniformly in $\delta$.

We now fix an integer $n\geq 1$, and consider an integer $1 \leq i \leq n+1$. We denote by $\mathcal{S}_i$ the set of collections of $n$ disjoint loops $\ell_1, \cdots ,\ell_n$ such that
\begin{itemize}
\item For $1 \leq j < i$, $\ell_j$ is the boundary of a domain in $\mathcal{R}_{j+1}$.
\item For $i \leq j \leq n$, $\ell_j$ is an Ising loop.
\item For any $1 \leq j < n$, $\ell_{j+1}$ is contained in the interior of $\ell_{j}$.
\end{itemize}

For $1 \leq i \leq n$, consider the following operation $F_i$ on spin configurations: condition first on the cut-out domains $\mathcal{C}_{i}$, and flip the Ising spins inside these domains. Conditioned on the cut-out domains $\mathcal{C}_{i}$, the Ising model inside of them has the law of independent Ising models with free boundary conditions. Hence the map $F_i$ is measure-preserving. Moreover the boundary of any domain in $\mathcal{R}_{i+1}$ becomes an Ising loop after the spin flip (by Lemma \ref{lem:arcsandloops}). Hence, any collection in $\mathcal{S}_{i+1}$ for the configuration $\sigma$ belongs to the set $\mathcal{S}_{i}$ for the configuration $F_i(\sigma)$. This gives the following estimate, for any $\varepsilon_1>0$:

\[
\mathbb{P}\left[ \sup_{\mathcal{C}\in\mathcal{S}_{i+1}} \underline{d}_{\mathcal{C}} \geq \varepsilon_1 \right] \leq \mathbb{P}\left[ \sup_{\mathcal{C}\in\mathcal{S}_{i}} \underline{d}_{\mathcal{C}} \geq \varepsilon_1 \right].
\]

If we piece together these estimates for $i$ going from $1$ to $n$, we see that

\[
 \mathbb{P}\left[\overline{d}_{\mathcal{R}_n} \geq \varepsilon_1 \right] \leq      \mathbb{P}\left[ \sup_{\mathcal{C}\in\mathcal{S}_{n+1}} \underline{d}_{\mathcal{C}} \geq \varepsilon_1 \right] \leq \mathbb{P}\left[ \sup_{\mathcal{C}\in\mathcal{S}_{1}} \underline{d}_{\mathcal{C}} \geq \varepsilon_1 \right]\leq \mathbb{P}\left[ \underline{d}_n \geq \varepsilon_1 \right],
\]
where  $\underline{d}_n$ is as in Lemma \ref{lem:finitelymanybigloops}. In other words, $\overline{d}_{\mathcal{R}_n}$ is stochastically dominated by $\underline{d}_n$. This yields the desired estimate on $\overline{d}_{\mathcal{R}_n}$ via Lemma \ref{lem:finitelymanybigloops}.
\end{proof}

\begin{lem}
\label{lem:non-left-most-loop}Any macroscopic level one Ising loop
is close to a leftmost level one loop, as in Theorem \ref{thm:main-thm}.
Moreover the set of rightmost level one loops has the same scaling
limit as the set of leftmost level one loops. \end{lem}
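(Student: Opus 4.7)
The plan is to show that any level one Ising loop $\ell_\delta$ is close---in the symmetric-difference sense of Theorem~\ref{thm:main-thm}---to both a leftmost and a rightmost level one loop, with the discrepancy concentrated at microscopic ``pinch detours'' at corners where the $\pm$ regions fail to form strong paths.

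From the trichotomy in the proof of Lemma~\ref{lem:outermost-loops-have-limit}, any level one loop $\ell_\delta$ that is not leftmost shares an edge with some leftmost level one loop $L_\delta \in \mathcal{L}(C_\delta)$, where $C_\delta$ is the cut-out domain containing it. A canonical such $L_\delta$ is obtained by walking along the weak $+$ circuit just outside $\ell_\delta$ and, at each corner where two $+$ faces meet only diagonally, taking the outermost resolution; the resulting loop automatically borders a strong $+$ path on its left and is therefore leftmost level one. Then $\ell_\delta$ and $L_\delta$ coincide along all edges that do not border such a pinch of the $+$ region, and each connected component of $(\ell_\delta\cup L_\delta)\setminus(\ell_\delta\cap L_\delta)$ bounds a lobe of $-$ spins attached to the bulk $-$ interior of $L_\delta$ only through a single diagonal corner. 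A symmetric construction, using the weak $-$ circuit just inside $\ell_\delta$ together with the innermost resolution at $-$ pinches, produces a rightmost level one companion $R_\delta$ with the same closeness property.

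It remains to control the diameters of the pinch lobes. A macroscopic pinch lobe at a corner would force a polychromatic four-arm event there, namely two macroscopic $-$ arms interleaved with two macroscopic $+$ arms. RSW-type crossing estimates for the critical FK-Ising model (see \cite{chelkak-duminil-hongler-kemppainen-smirnov}) uniformly bound the probability of such bottleneck configurations; together with Proposition~\ref{prop:fk-loops-scaling-limit} and the Appendix's correspondence between discrete and continuous FK double points---pinches of level one Ising loops lie on cut-out boundaries, hence on FK loops---this implies that, for any $\epsilon>0$, with probability tending to one as $\delta\to 0$, every pinch lobe attached to a level one Ising loop has diameter less than $\epsilon$. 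In particular, applied to a leftmost loop $\ell_\delta = L_\delta$, the construction of $R_\delta$ yields a pairing between leftmost and rightmost level one loops with pairing error vanishing in $d_\Gamma$, so the scaling limits of leftmost and rightmost level one Ising loops coincide in $d_\mathcal{X}$.

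The main obstacle is precisely this uniform smallness of pinch detours: corner pinches of the $+$ (or $-$) region appear abundantly along discrete Ising interfaces, and one must rule out macroscopic lobes simultaneously at all corners along all level one loops. This requires combining an Ising/FK-Ising RSW four-arm estimate with the geometric input about FK double points from the Appendix, so as to bridge the discrete abundance of pinch points with their microscopic nature in the scaling limit.
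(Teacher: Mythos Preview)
Your approach differs substantially from the paper's. The paper's proof is a one-line transfer: \cite[Section~4.2]{benoist-duminil-hongler} already shows that leftmost and rightmost Ising \emph{arcs} (with free boundary conditions) can be paired so that the symmetric difference of each pair has only small connected components; since level one loops are obtained by gluing such arcs inside cut-out domains (Lemma~\ref{lem:outermost-loops-have-limit}), the same closeness passes to loops, and any other level one loop sharing an edge with $L_\delta$ is trapped between the leftmost/rightmost pair. You instead attempt a direct arm-event argument at pinch corners.

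There is a genuine gap in your reduction. The claim that ``pinches of level one Ising loops lie on cut-out boundaries, hence on FK loops'' is not justified and is in general false. A pinch of the $+$ region is a purely spin-configuration feature: two diagonally adjacent $+$ spins with $-$ spins on the other diagonal. In the Edwards--Sokal coupling these four vertices may sit in up to four distinct FK clusters, and there is no reason the corner should lie on the boundary of a cut-out domain; the leftmost loop $L_\delta$ touches $\partial C_\delta$ somewhere, but its pinches can occur deep inside $C_\delta$. Consequently the Appendix's control of FK double points (Proposition~\ref{prop:fktouches}, Remark~\ref{rem:fktouches}) gives you nothing here, and the RSW input you cite (\cite{chelkak-duminil-hongler-kemppainen-smirnov}, which is really the convergence paper; the crossing estimates are in \cite{chelkak-duminil-hongler}) concerns FK crossings, not Ising spin arms.

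A four-arm argument \emph{can} be made to work---this is essentially what \cite{benoist-duminil-hongler} does at the level of arcs---but it must use Ising spin-arm estimates (derived from FK ones via the coupling) and must handle the union over all pinch corners along all macroscopic loops; it does not route through the FK double-point machinery. As written, your bridge from the abundance of discrete pinches to their microscopic size is missing its load-bearing step.
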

\begin{proof}
Any level one Ising loop $\ell$ is contained in a leftmost level one Ising loop $\ell^L$ (that we can for example construct as the boundary of the connected component containing $\ell$ of the complement of the set of vertices that are connected to the boundary by a strong path of $+$).
From the proof of Lemma \ref{lem:outermost-loops-have-limit}, we have that, uniformly in $\delta$, with high probability, there is a bounded number of leftmost level one Ising loop $\ell^L$ of diameter larger than a fixed $\varepsilon$, it is hence enough to prove the current lemma for a fixed leftmost Ising loop $\ell^L$.

Recall that such a loop $\ell^L$ can be recovered by gluing Ising arcs in a subdomain $R$ of $\Omega^\delta$ that carries free Ising boundary conditions. In particular (see Figure \ref{fig:lfrome}), if $a,b,c,d$ are four points on the boundary of $R$ that are in direct order, and such that $a$ and $b$ are not disconnected by $\ell^L$, and $c$ and $d$ are not disconnected by it either, but $a$ and $c$ are, then the loop $\ell^L$ can be obtained by gluing a subpath of the leftmost Ising exploration $\gamma^L_{a,b}$ from $a$ to $b$ (as defined in \cite{benoist-duminil-hongler}) with a subpath of the leftmost Ising exploration $\gamma^L_{c,d}$ from $c$ to $d$: indeed, the curve $\gamma^L_{a,b}$ contains all of $\ell^L$ except for the arc disconnecting $a$ from the interior of $\ell^L$.

\begin{figure}
\centering \includegraphics[width=12cm]{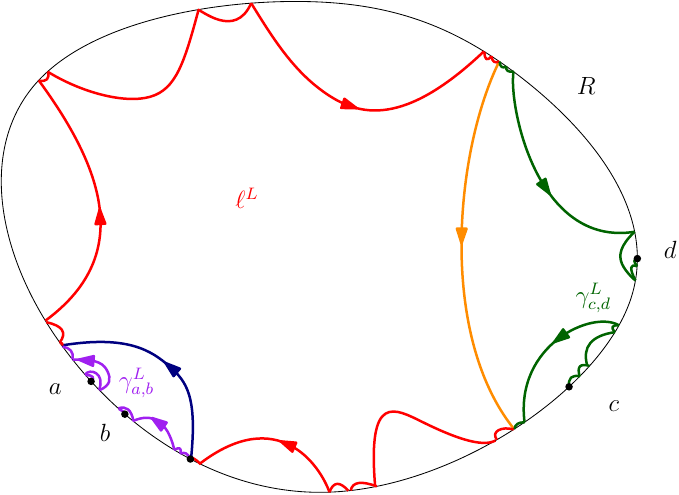}
\caption{In red, orange and blue, the loop $\ell^L$. In purple, red and orange, the exploration $\gamma^L_{a,b}$. In green, red and blue, the exploration $\gamma^L_{c,d}$.}
\label{fig:lfrome}
\end{figure}

In \cite[Section 4.2]{benoist-duminil-hongler} (see Lemma 18 and the argument following it), it is shown that the leftmost and rightmost Ising explorations from $a$ to $b$, $\gamma^L_{a,b}$ and $\gamma^R_{a,b}$ are close to each other in a strong sense: the supremum of the diameters of the connected components of
$\left(\gamma^L_{a,b}\cup\gamma^R_{a,b}\right)\setminus\left(\gamma^L_{a,b}\cap\gamma^R_{a,b}\right)$
goes to $0$ in probability as the mesh size $\delta$ goes to $0$ (this is a rephrasing of the last assertion of Theorem 6). This implies that, with high probability, we can glue a subpath of $\gamma^R_{a,b}$ together with a subpath of $\gamma^R_{c,d}$ and thus get a rightmost Ising loop $\ell^R$ that will be close to $\ell^L$ in the same strong sense. In particular, $\ell^R$ will share some edges with $\ell^L$ and hence be a level one loop.

Moreover, with high probability, any Ising loop $\ell$ contained in $\ell^L$ falls in one of three categories:
\begin{itemize}
\item One of the edges of $\ell$ belongs to $\ell^L\cap\ell^R$. In that case, we see that $\ell^L\cap\ell^R\subset\ell$ and $\ell$ is close to both $\ell^L$ and $\ell^R$ in the strong sense described above.
\item No edge of $\ell$ belongs to $\ell^L\cap\ell^R$, but $\ell$ shares an edge with $\ell^L$. In that case, the loop $\ell$ is contained in a connected components of
$\left(\ell^L\cup\ell^R\right)\setminus\left(\ell^L\cap\ell^R\right)$, hence is microscopic.
\item The loop $\ell$ shares no edge with $\ell^L$, in which case it is of level 2.
\end{itemize}

As a result, for any $ \varepsilon > 0 $, with probability tending to $ 1 $ as $ \delta \to 0 $, any loop of level 1 of diameter greater than $ \varepsilon $ contained in $ \ell^L $ is 
such that $ d_\Gamma (\ell, \ell^L) \leq \varepsilon $, and such that the connected components of $ ( \ell \cup \ell^L ) \setminus ( \ell \cap \ell^L ) $ have diameter less than $ \varepsilon $.
\end{proof}

\section{\label{sec:identification-scaling-limit}Identification of the Scaling
Limit}

In this section, we finish the proof of Theorem \ref{thm:main-thm}, by using the characterization of CLE.

\subsection{\label{sub:qualitative-scaling-limit}Qualitative Properties of the
Scaling Limit of Ising Loops}

\begin{lem}
\label{lem:limits-avoid-boundary} \label{lem:ising-loops-dont-touch-boundary}
In the scaling limit, the outermost Ising loops almost
surely do not touch the boundary. \end{lem}

In order to prove this, we will use the following notations. Consider the critical Ising model with $+$ boundary conditions on
the discretization $\Omega_{\delta}$ of a Jordan domain $\Omega$. Let us fix a real number $d\leq {\rm diam}(\Omega) /10$, and consider a cover of the boundary $\partial \Omega$ by a finite family of boundary subarcs $\mathcal{I}_1,\cdots,\mathcal{I}_m$ of diameter less than $d$.

For any boundary subarc $\mathcal{I}\subset\partial\Omega$, and any
positive real $r>0$, let $\mathcal{I}^{r}:=\left\{ z\in\Omega:\mathrm{d}\left(z,\mathcal{I}\right)\leq r\right\} $ be the $r$-neighborhood of $\mathcal{I}$.  For any positive reals
$\varepsilon>\eta>0$, let $E_{\delta}\left(\eta,\varepsilon,\mathcal{I}\right)$
be the event that there is a weak $-$ spin cluster linking $\mathcal{I}^{\eta}$
to $\Omega\setminus(\partial \Omega)^\varepsilon$ in $\mathcal{I}^{d}$.

\begin{lem}\label{lem:ising-loops-dont-touch-boundary-eq}
For any $\varepsilon>0$, we have 
\begin{equation}
\lim_{\eta\to0}\limsup_{\delta\to0}\mathbb{P}\left(\bigcup_j E_{\delta}\left(\eta,\varepsilon,\mathcal{I}_j\right)\right)=0.\label{eq:event}
\end{equation}
\end{lem}

\begin{proof}
Let us suppose by contradiction that this limit is positive: as we
will see, this will imply the occurrence of a zero-probability
event for SLE$_{3}$. If (\ref{eq:event}) were not to hold, we could
find some $\varepsilon>0$ and some boundary subarc $\mathcal{I}=\mathcal{I}_j$
such that 
\[
\limsup_{\eta\to0}\limsup_{\delta\to0}\mathbb{P}\left(E_{\delta}\left(\eta,\varepsilon,\mathcal{I}\right)\right)=\alpha>0.
\]
Let us consider a boundary subarc $\mathcal{J}$ that does not intersect $\mathcal{I}^{2d}$
and consider the critical Ising model on $\Omega_{\delta}$ with $+$
boundary conditions on $\partial\Omega\setminus\mathcal{J}$ and $-$
boundary conditions on $\mathcal{J}$ (denote by $\mathbb{P}^{+/-}$
the corresponding measure). By monotonicity with respect to the boundary
conditions, we would have 
\[
\limsup_{\eta\to0}\limsup_{\delta\to0}\mathbb{P}^{+/-}\left(E_{\delta}\left(\eta,\varepsilon,\mathcal{I}\right)\right)=\alpha'\geq\alpha>0.
\]

\begin{figure}
\centering \includegraphics[width=12cm]{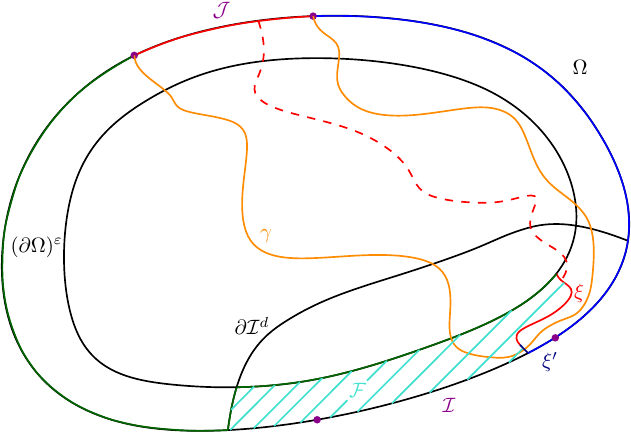}
\caption{In the domain $\Omega$, with boundary arcs $\mathcal{I}$ and $\mathcal{J}$ materialized by their extremities (purple points), we consider the Ising model with mixed boundary conditions $\mathbb{P}^{+/-}$, conditioned on $\xi$ to exist, and on $\mathcal{F}$, the data of the state of all spins in the dashed turquoise region.
We obtain an Ising model in a quad whose boundary consists of $\mathcal{J}$ (carrying $-$ spins, in red) ; the green path, carrying mixed spins ; the curve $\xi$ carrying $-$ spins, in red ; and a blue curve (in two shades), carrying mixed spins.
RSW estimates ensure the existence with uniformly positive probability of a strong path of $-$ spins (dashed red) in this quad. This path forces the Ising interface $\gamma$ generated by the mixed boundary conditions on $\Omega$ to get close to $\partial\Omega$ at the base of $\xi$.}
\label{fig:Ising_loops_boundary} 
\end{figure}

Let us fix sequences $\eta_{k}\to0$ as $k\to\infty$ and $\delta_{n,k}\to0$
as $n\to\infty$ such that for all $n,k$ we have 
\[
\mathbb{P}^{+/-}\left(E_{\delta_{n,k}}\left(\eta_{k},\varepsilon,\mathcal{I}\right)\right)\geq\frac{\alpha'}{2}>0.
\]
Let us consider the first crossing $\xi$ of $-$ spins in $\mathcal{I}^{d}$ from $\mathcal{I}^{\eta_{k}}$
to $\Omega\setminus (\partial \Omega)^\varepsilon$ in counterclockwise
order. We arbitrarily extend $\xi$ to $\partial \Omega$ by adding to it a path $\xi'$ staying in $\mathcal{I}^{\eta_k}$

We condition on the data $\mathcal{F}$ of all the Ising spins included in  $\mathcal{I}^{d}$ to the left of $\xi\cup\xi'$.
Conditionally on $\xi$ to exist, and on the data $\mathcal{F}$, the law of the remaining spins is given by an Ising model in a quad, where two of the opposing boundary subintervals ($\xi$ and $\mathcal{J}$) carry $-$ spins (see Figure \ref{fig:Ising_loops_boundary}). Moreover, the extremal length of this quad is bounded, uniformly in $\xi$, $\delta$.

By RSW-type estimates \cite[Corollary 1.7]{chelkak-duminil-hongler},
we obtain that with (uniformly) positive probability, $\xi$ is connected
by a path of $-$ spins to the $-$ spins of $\mathcal{J}$. This
implies that the Ising interface generated by the $\pm$ boundary
condition hits $\mathcal{I}^{\eta_{k}}$. Passing to the limit $n\to\infty$
and then $k\to\infty$ implies that the scaling limit of the interface
generated by the boundary conditions hits $\mathcal{I}$ with positive
probability, which is a contradiction: the scaling limit is $\mathrm{SLE}_{3}$
\cite[Theorem 1]{chelkak-duminil-hongler-kemppainen-smirnov}, which
never hits the boundary of the domain it lives in \cite[Proposition 6.8]{lawler-ii}. \end{proof}

\begin{proof}[Proof of Lemma \ref{lem:ising-loops-dont-touch-boundary}]
Lemma \ref{lem:ising-loops-dont-touch-boundary-eq} shows that the scaling limits of Ising loops almost surely either do not touch the boundary or are included in it. However, this second possibility almost surely does not happen, as Ising loops are included in cut-out domains of the scaling limits of the FK loops, themselves described by a branching $\mathrm{SLE}_{16/3}\left(-2/3\right)$. On the event that an Ising loop is included in the boundary, the same would have to be true of a piece of $\mathrm{SLE}_{16/3}\left(-2/3\right)$, which cannot happen.

The interested reader can find an inspiration for an argument that does not use the FK coupling but RSW-type considerations instead in \cite[Lemma 18]{benoist-duminil-hongler}
\end{proof}

\begin{lem}
\label{lem:loops-limits-are-simple}The scaling limit of the outermost
Ising loops are almost surely simple and do not touch each other.\end{lem}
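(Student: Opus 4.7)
The plan is to split the statement into two parts: (i) each outermost Ising loop in the scaling limit is a simple curve, and (ii) two distinct outermost loops cannot share a point. Both will follow from combining the non-boundary-touching Lemma~\ref{lem:ising-loops-dont-touch-boundary}, the local $\mathrm{SLE}_3$ structure established in Lemma~\ref{lem:loops-local-structures}, and the branching tree structure of the Free Arc Ensemble inherited from the $\mathrm{SLE}_3(-\tfrac32,-\tfrac32)$ exploration of~\cite{benoist-duminil-hongler}.

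For (i), I would use the construction of outermost Ising loops given in the proof of Lemma~\ref{lem:outermost-loops-have-limit}: each outermost loop sits inside a continuous cut-out domain $C$ and is obtained as a concatenation of finitely many FAE arcs in $C$. Each such arc is an $\mathrm{SLE}_3$-type curve and hence simple (since $\kappa=3<4$). By the non-crossing tree structure of the branching $\mathrm{SLE}_3(-\tfrac32,-\tfrac32)$ in $C$, any two distinct arcs lie in disjoint open simply-connected sub-regions of $C$ separated by the remainder of the tree, so they can meet only at shared branching points on $\partial C$. Consequently, the only potential failure of simplicity is that, while traveling along the loop, one revisits a boundary branching point that has already been used.

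For (ii), the argument splits in two cases. If two distinct loops $\ell_1,\ell_2$ lie in the same continuous cut-out domain $C$, then by the tree structure they correspond to two different clusters of the FAE, separated by intermediate arcs and therefore living in disjoint simply-connected subregions of $C$; as in (i) they can only share points on $\partial C$. If $\ell_1,\ell_2$ lie in distinct cut-out domains $C_1\neq C_2$, then any shared point $z$ must sit on $\partial C_1\cap \partial C_2$, forcing $z$ to be a double point of the scaling-limit FK configuration; the structure of such points worked out in the Appendix (Proposition~\ref{prop:fk-loops-scaling-limit} and Remark~\ref{rem:fktouches}) shows that two distinct leftmost Ising loops cannot accumulate on the two sides of such a point.

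The main obstacle is the boundary-revisit problem that remains at the end of both (i) and (ii): one must show that the FAE exploration does not bring two different arcs back to meet at the same boundary point on $\partial C$. The strategy is to apply Lemma~\ref{lem:ising-loops-dont-touch-boundary} iteratively along the branching tree. After revealing an initial branching point $p$ together with the arcs of the tree explored up to that stage, the subsequent exploration takes place in a new simply-connected subdomain whose boundary contains a neighborhood of $p$ and carries a $\pm$-monochromatic Ising boundary condition; applying Lemma~\ref{lem:ising-loops-dont-touch-boundary} to this new domain (using the $\pm$ symmetry) shows that no later arc can come back close to $p$. Iterating over the whole tree rules out boundary revisits and closes both (i) and (ii). The non-Jordan geometry of the intermediate subdomains produced by the exploration is handled by localizing to small neighborhoods of the putative touching points, where one can embed the configuration into an auxiliary Jordan domain and invoke Lemma~\ref{lem:ising-loops-dont-touch-boundary} via monotonicity of boundary conditions.
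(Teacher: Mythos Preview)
Your approach is broadly workable but organized differently from the paper's and consequently does more work than necessary. The paper's proof hinges on the notion of \emph{harvest time} from the iterative exploration in Lemma~\ref{lem:outermost-loops-have-limit}: two loops discovered at different harvest times are separated by the $+$ boundary of the intermediate subdomain in which the later one lives, so a \emph{single} application of Lemma~\ref{lem:ising-loops-dont-touch-boundary} (not an iteration along a tree) shows they cannot touch. For loops harvested at the same time, the paper simply records two facts: (a) by Proposition~\ref{prop:cut-out_domains} the boundaries of the relevant FK cut-out domains are simple disjoint curves, so loops sitting in distinct cut-out domains cannot meet; and (b) within a single cut-out domain the loops are glued from arcs of the FAE, which is by construction in \cite{benoist-duminil-hongler} a family of simple disjoint arcs, so the glued loops are automatically simple and mutually disjoint. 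The ``boundary-revisit'' obstacle you isolate is thus absorbed into the properties of the FAE already established in \cite{benoist-duminil-hongler}; the paper never re-derives it.

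Your decomposition by cut-out domain rather than by harvest time leaves a genuine gap in case~(ii): when $\ell_1$ and $\ell_2$ are discovered at different harvest times, the domains $C_1,C_2$ you attach to them are cut-out domains of \emph{different} (resampled) FK configurations, so the assertion ``any shared point must lie on $\partial C_1\cap\partial C_2$'' and the appeal to Remark~\ref{rem:fktouches} do not apply as written. Your ``main obstacle'' paragraph is close to what is missing---it is essentially the harvest-time separation argument---but you phrase it as an iteration along the FAE tree inside a fixed cut-out domain rather than as the separation between successive stages of the outer FK/Ising exploration. Reorganizing around harvest time closes this gap and collapses most of your work into a direct citation of Proposition~\ref{prop:cut-out_domains} and the FAE results of \cite{benoist-duminil-hongler}.
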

\begin{proof}
As explained in the proof of Lemma \ref{lem:outermost-loops-have-limit}, we can discover
the collection of outermost Ising loops $\mathcal{L}_1$ through the iteration of
an exploration process of FK loops and Ising arcs. At the $n$-th, step, we discover a sub-collection $\mathcal{L}_n$

Two Ising loops with different discovery times $m<n$ do not touch each other in the scaling limit. Indeed,
the loops in $\cup_{n>m} \mathcal{L}_n$ (i.e., the loops that are discovered at times strictly larger than $m$) are outermost loops within one of the cut-out domains of the collection $\mathcal{R}_m$. By Lemma \ref{lem:limits-avoid-boundary}, the loops in $\cup_{n>m} \mathcal{L}_n$ stay strictly inside the domains of $\mathcal{R}_n$ (with the notation of the proof of Lemma \ref{lem:outermost-loops-have-limit}) and hence cannot intersect the loops in $\mathcal{L}_m$ 

We now show that the loops of a sub-collection $\mathcal{L}_m$ are almost
surely simple and do not touch each other. Recall that these Ising loops are recovered
by gluing arcs of a FAE staying in a cut-out domain of some FK loops (in the collection $\mathcal{R}_m$).
By Proposition \ref{prop:cut-out_domains}, the boundaries of cut-out
domains in $\mathcal{R}_m$ are simple disjoint curves. Moreover, the Ising loops that
stay within a single cut-out domain are simple and disjoint: indeed,
these loops are constructed by gluing arcs of the FAE, which is a
family of simple disjoint arcs. \end{proof}
\begin{lem}
\label{lem:loops-domain-markov-property}In the scaling limit, the
outermost Ising loops satisfy the domain Markov property. \end{lem}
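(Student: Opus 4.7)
The target is the Markovian restriction property from the CLE characterization in Section \ref{sub:markovian-characterization}: for a compact $K \subset \Omega$ with $\Omega \setminus K$ simply connected, the law $\mu_\Omega$ of the scaling limit of outermost leftmost Ising loops, conditioned on the loops in $I_K$, should equal the independent product of $\mu_{\Omega^{(j)}}$ over the connected components $\Omega^{(j)}$ of $\Omega \setminus \mathcal{L}_K$. The plan is to derive this directly from the discrete spatial Markov property of the Ising model, transported to the limit via Lemma \ref{lem:outermost-loops-have-limit}.

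At the lattice level, the argument is essentially tautological. Pick a discretization $K_\delta$ of $K$ and let $\mathcal{O}_\delta^K$ be the collection of leftmost outermost Ising loops in $\Omega_\delta$ that intersect $K_\delta$. By definition, each loop in $\mathcal{O}_\delta^K$ is bordered on its outside by a strong path of $+$ spins, so the set $\mathcal{L}_K^\delta := K_\delta \cup \bigcup_{\gamma \in \mathcal{O}_\delta^K}(\gamma \cup \overline{\mathrm{Inside}(\gamma)})$ is enclosed by $+$ spins. Applying the spatial Markov property of Ising to the $+$ barrier surrounding each connected component $\Omega_\delta^{(j)}$ of $\Omega_\delta \setminus \mathcal{L}_K^\delta$ yields that, conditionally on $\mathcal{O}_\delta^K$, the spin configurations on the $\Omega_\delta^{(j)}$ are independent critical Ising models with $+$ boundary conditions. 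In particular, the set of leftmost outermost Ising loops that avoid $K_\delta$ is precisely the union (over $j$) of the outermost loop collections sampled independently from the Ising model on each $\Omega_\delta^{(j)}$ with $+$ boundary conditions.

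To pass this identity to the continuum, I would work on a probability space where all the relevant objects converge simultaneously (via Skorokhod representation). The full collection of outermost loops converges by Lemma \ref{lem:outermost-loops-have-limit}; by Lemmas \ref{lem:ising-loops-dont-touch-boundary} and \ref{lem:loops-limits-are-simple} the limit loops are simple, disjoint, and avoid $\partial \Omega$. Choosing $K_\delta$ to be a Hausdorff approximation of $K$, the fact that no loop in the limit almost surely traces $\partial K$ (the argument of Lemma \ref{lem:ising-loops-dont-touch-boundary} is local and applies to any compact boundary, since conformal invariance reduces to the case at hand) implies that for every limit loop the events $\{\gamma \cap K \neq \emptyset\}$ are a.s.\ continuous functions of the configuration. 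Together with simplicity and disjointness, this forces $\mathcal{L}_K^\delta$ to converge to $\mathcal{L}_K$ in the Hausdorff sense, and each $\Omega_\delta^{(j)}$ to converge in Carathéodory sense to a corresponding $\Omega^{(j)}$.

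The final step is to apply Lemma \ref{lem:outermost-loops-have-limit} once more, this time \emph{in each component}: since $\Omega_\delta^{(j)} \to \Omega^{(j)}$ and the boundary conditions are $+$, the outermost leftmost loops inside converge to the law $\mu_{\Omega^{(j)}}$. Their independence is preserved from the discrete level. Combining the convergence of $\mathcal{O}_\delta^K$ to the set of limit loops hitting $K$ with the convergence of the independent product over components, one reads off that $\mu_\Omega$ conditioned on $\{\gamma_i\}_{i \in I_K}$ has law $\bigotimes_j \mu_{\Omega^{(j)}}$. The main obstacle I anticipate is the joint convergence of the two pieces — the loops hitting $K$ and the independent Ising models in the complement — since conditioning is not continuous in general; one gets around this by using the Skorokhod coupling and the a.s.\ continuity of the partition into $\mathcal{L}_K$ and its complement at $\mu_\Omega$-typical samples, which is what the disjointness and boundary-avoidance of limit loops buy us.
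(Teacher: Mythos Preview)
Your approach is correct and is precisely the paper's strategy: the paper's own proof is a two-line remark that the property ``directly follows from the discrete Markov property, together with the convergence of outermost Ising loops for any discrete approximation of the continuous domain (Lemma~\ref{lem:outermost-loops-have-limit}),'' and you have simply unpacked that sketch in detail. The extra care you take with the continuity of the partition into $K$-hitting and $K$-avoiding loops goes beyond what the paper spells out, but is in the same spirit.
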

\begin{proof}
This directly follows from the discrete Markov property, together
with the convergence of outermost Ising loops for any discrete approximation
of the continuous domain (Lemma \ref{lem:outermost-loops-have-limit}). 
\end{proof}

Finally, we will need to identify the value of the parameter $\kappa=3$.
\begin{lem}
\label{lem:loops-local-structures}Consider the scaling limit of the
outermost Ising loops. Almost surely, there is a subarc of a loop that
has Hausdorff dimension 11/8. \end{lem}
\begin{proof}
As explained in the proof of Lemma \ref{lem:outermost-loops-have-limit},
the Ising loops can be constructed by gluing arcs of the FAE, which
are pieces of an $\mathrm{SLE}_{3}\left(-\frac{3}{2},-\frac{3}{2}\right)$
exploration tree. As a result, Ising loops have subarcs that have Hausdorff dimension 11/8 (see \cite{beffara}). 
\end{proof}

\subsection{\label{sub:proof-main-thm}Proof of Theorem \ref{thm:main-thm}}
\begin{proof}
The outermost Ising loops have a conformally invariant scaling limit
(Lemma \ref{lem:outermost-loops-have-limit}). By Lemmas \ref{lem:limits-avoid-boundary}
and \ref{lem:loops-limits-are-simple}, the loops of the scaling limit
are simple and do not touch each other or the boundary. By Lemma \ref{lem:loops-domain-markov-property},
these loops satisfy the Markovian restriction property. Hence, by
the Sheffield-Werner Markovian characterization property, the scaling
limit of the outermost Ising loops is a $\mathrm{CLE}_{\kappa}$ for
some $\frac{8}{3}<\kappa\leq4$. By Lemma \ref{lem:loops-local-structures}
and the construction of $\mathrm{CLE}_{\kappa}$ in terms of $\mathrm{SLE}_{\kappa}$-like
loops \cite{sheffield}, we deduce that $\kappa$ must be equal to
$3$.

We now establish the convergence of the Ising loops of level $2$.
These loops lay inside of the outermost Ising loops, i.e. are loops
of an Ising configuration with $-$ boundary conditions. The set of
leftmost loops in a domain with $-$ boundary conditions has the same
law as the set of rightmost loops in a domain with $+$ boundary conditions.
By the second part of Lemma \ref{lem:non-left-most-loop}, the scaling
limit of level 2 loops is hence given by independent $\mathrm{CLE}_{3}$
in each of the level 1 loops. Further iterating this argument, we
identify the set of loops of level $n$ as independent $\mathrm{CLE}_{3}$
inside the loops of level $n-1$.

Moreover, a corollary of Lemma \ref{lem:finitelymanybigloops} is that the supremum of the diameters of the Ising loops of level $n$ goes to $0$ in probability as $n\to\infty$, uniformly in the mesh size $\delta$. Hence the convergence of the set of all Ising loops to nested $\mathrm{CLE}_{3}$ follows from the convergence of the Ising loops of level $n$, for each fixed $n$. 
\end{proof}

\appendix

\section{FK interfaces have a conformally invariant scaling limit}

In this section, we prove Proposition \ref{prop:fk-loops-scaling-limit}:
we show that for any Jordan domain $\Omega$, the set of all level
one (outermost) FK loops in $\Omega$ converges towards a conformally
invariant limit (see also \cite{kemppainen-smirnov-ii} for an alternative
approach). We prove the convergence of a single FK exploration path
(Lemma \ref{lem:cvFKinterface}) without smoothness assumptions on
$\partial\Omega$.

We rely on \cite{kemppainen-smirnov} to get tightness of the exploration
path, and we rely on the convergence result \cite[Theorem 2]{chelkak-duminil-hongler-kemppainen-smirnov}
together with an argument similar to the one used in \cite{benoist-duminil-hongler}
to identify the scaling limit. Once we know the convergence of a single
exploration, we can deduce the convergence of all outermost FK loops
by iteratively using the convergence of FK exploration interfaces
(Proposition \ref{prop:cvallFK}).

Let us also point out that \cite{kemppainen-smirnov-iii} have recently given an alternative proof of the results stated here, with a more in-depth description of the relevant loops.

\subsection{Convergence of FK interfaces}

Consider a discrete domain $\Omega_{\delta}$, with dual $\Omega_{\delta}^{*}$
and bi-medial $\Omega_{\delta}^{b}$. Consider an FK model configuration
$\omega$ on $\Omega_{\delta}$ with wired boundary conditions and
its dual configuration $\omega^{*}$. Given boundary bi-medial points
$a$ and $b$, we define the \emph{FK exploration $\gamma_{\delta}$
of $\omega$ from $a$ to $b$} as follows: 
\begin{itemize}
\item $\gamma_{\delta}$ is a simple path on the bi-medial graph of $\Omega_{\delta}$
from $a$ to $b$ 
\item when $\gamma_{\delta}$ is not on the boundary $\partial\Omega_{\delta}^{b}$,
$\gamma_{\delta}$ follows the primal boundary cluster of $\omega$
on its left and a dual cluster of $\omega^{*}$ on its right 
\item when $\gamma_{\delta}$ is on $\partial\Omega_{\delta}^{b}$, $\gamma_{\delta}$
keeps a dual cluster on its right whenever possible (i.e. with the
constraint that $\gamma_{\delta}$ is simple and goes from $a$ to
$b$). \end{itemize}
The following result gives us that the interface $ \gamma_\delta $ converges to 
the ${\rm SLE}_{\kappa}(-\rho)$ process with $ \kappa = 16/3 $ and $ \rho = \kappa - 6 $
(see e.g. \cite{schramm-wilson, sheffield}).
\begin{lem}
\label{lem:cvFKinterface} Let $\Omega_{\delta}$ be a discrete approximation
of a Jordan domain $\Omega$ and let $a_{\delta},b_{\delta}\in\partial\Omega_{\delta}^{b}$
with $a_{\delta},b_{\delta}\to a,b\in\partial\Omega$. Let $\gamma_{\delta}$
be the FK exploration from $a_{\delta}$ to $b_{\delta}$. Then $\gamma_{\delta}$
converges in law to an ${\rm SLE}_{16/3}(-2/3)$ curve, with respect
to the supremum norm up to reparametrization. \end{lem}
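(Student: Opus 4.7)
The plan is the standard two-step strategy for convergence of lattice interfaces to SLE: first establish tightness via the Kemppainen--Smirnov machinery, then identify the driving process by reducing to a case where convergence is already known (here, the Dobrushin FK exploration of \cite[Theorem 2]{chelkak-duminil-hongler-kemppainen-smirnov}) through an absolute continuity argument in the spirit of \cite{benoist-duminil-hongler}.

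For tightness of $(\gamma_\delta)_{\delta>0}$ in the supremum norm up to reparametrization, I would apply the Kemppainen--Smirnov criterion, which reduces the question to verifying an annulus crossing estimate of the form ``for any annulus $A(z,r,R)$ of large modulus, the probability that $\gamma_\delta$ makes an unforced crossing of $A$ is small'', uniformly in $\delta$, in the position of the annulus, and in the portion of $\gamma_\delta$ already explored. For the critical FK--Ising model with wired boundary conditions, such an estimate follows from the standard RSW theory for FK--Ising (as used in \cite{chelkak-duminil-hongler-kemppainen-smirnov}), the crucial point being that these estimates are not sensitive to the boundary data produced by conditioning on a stopped exploration. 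After mapping $\Omega$ to the upper half-plane with $a \mapsto 0$ and $b \mapsto \infty$ and reparametrizing by half-plane capacity, Kemppainen--Smirnov then guarantees that any subsequential limit is generated by a continuous Loewner driving function $W_t$.

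To identify the driving as that of $\mathrm{SLE}_{16/3}(-2/3)$, with force point at the prime end of the wired arc $[a,b]$ adjacent to $a$, I would fix a small $\epsilon>0$ and a small neighborhood $J$ of that prime end, and stop the exploration at the first time $\tau_\epsilon$ it gets within distance $\epsilon$ of $J$. On $[0,\tau_\epsilon]$, the wired--wired FK--Ising measure is absolutely continuous with respect to the Dobrushin measure obtained by declaring the ``$J$-side'' arc free; the Radon--Nikodym derivative is a ratio of FK partition functions in the slit domain $\Omega_\delta \setminus \gamma_\delta[0,\tau_\epsilon]$. Using the discrete parafermionic observable machinery underlying \cite[Theorem 2]{chelkak-duminil-hongler-kemppainen-smirnov}, this ratio converges to the explicit local martingale for chordal $\mathrm{SLE}_{16/3}$ corresponding to the boundary conformal weight associated to a force of strength $\rho=-2/3$. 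Girsanov's theorem then identifies the limiting driving, up to time $\tau_\epsilon$, as that of $\mathrm{SLE}_{16/3}(-2/3)$; letting $\epsilon \downarrow 0$ and iterating exhausts the lifetime of the curve up to its terminal hitting of $b$.

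The step I expect to be the main obstacle is handling the boundary behavior of $\gamma_\delta$ on the wired arc $[a,b]$: the discrete exploration touches this arc, and $\mathrm{SLE}_{16/3}(-2/3)$ touches its force point, infinitely often near the accumulation times, and one must show that the discrete boundary-touching pattern passes cleanly to the Loewner driving in the limit. Combining the absolute continuity argument above with uniform RSW-type a priori estimates to control the set of boundary-touching times at all scales should allow one to conclude, but this is the delicate technical point where tightness of $W_t$ alone is not enough.
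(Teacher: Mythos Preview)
Your tightness step via the Kemppainen--Smirnov criterion matches the paper. The identification step, however, takes a different route from the paper and, as written, has a genuine gap.

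You propose to compare the wired-boundary exploration to a Dobrushin one via absolute continuity and Girsanov. The difficulty is that the force point sits at $0^{+}$, immediately adjacent to the starting point $a$, so your stopping time $\tau_\epsilon$ (first approach to a neighborhood $J$ of that prime end) is essentially zero: there is no initial interval on which a Radon--Nikodym derivative against a fixed Dobrushin measure is available. More structurally, in a Dobrushin setup with a free arc $J$ the natural interface runs from $a$ to the far endpoint of $J$, not to $b$, so the two path-valued observables you wish to compare are not the same object, and absolute continuity of the FK \emph{measures} does not by itself relate their laws. You correctly flag the boundary-touching behaviour as the crux, but ``RSW-type a priori estimates'' does not supply the missing mechanism. (Incidentally, the argument of \cite{benoist-duminil-hongler} you invoke is not an absolute-continuity/Girsanov argument either; it is the excursion scheme below.)

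The paper avoids absolute continuity entirely. It encodes a subsequential limit $\gamma$ via its Loewner driving $U_t$ together with the image $O_t$ of the rightmost boundary touch, and sets $X_t=(O_t-U_t)/\sqrt{\kappa}$. During each excursion of $X_t$ away from $0$, the slit domain genuinely carries macroscopic Dobrushin boundary data, so \cite[Theorem~2]{chelkak-duminil-hongler-kemppainen-smirnov} applies \emph{directly}: the curve is chordal $\mathrm{SLE}_{16/3}$ targeted at $O_t$, which by the coordinate change of \cite{schramm-wilson} is $\mathrm{SLE}_{16/3}(-2/3)$ targeted at $b$ with force point $O_t$. This identifies the ordered family of excursions of $X_t$ as those of a Bessel$(3/2)$ process, and instantaneous reflection (a deterministic Loewner fact) then pins down $X_t$ itself. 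The remaining question---whether extra drift can accumulate on $\{X_t=0\}$---is settled by writing $O_t=\int_0^t 2(\sqrt{\kappa}\,X_s)^{-1}\,ds + A_t$ with $A_t$ varying only on $\{X_t=0\}$, observing that $U_t$, $X_t$ and the integral are all $(\tfrac12)^{-}$-H\"older (the first by \cite{kemppainen-smirnov}, the others as Bessel quantities), hence so is $A_t$; but $\{X_t=0\}$ has Hausdorff dimension $\tfrac14<\tfrac12$, forcing $A_t\equiv 0$. This H\"older/dimension step is the precise replacement for your unspecified control of boundary-touching times.
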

\begin{proof}
For any discrete time step $n\geq0$, let us denote by $\Omega_{n,\delta}$
the connected component of $\Omega_{\delta}\setminus\gamma_{\delta}\left[0,n\right]$
containing $b$. Let us denote by $O_{n,\delta}$ the clockwise-most
point of $\partial\Omega_{n,\delta}\cap\partial\Omega_{\delta}$ (i.e.
the rightmost intersection of $\gamma_{\delta}\left[0,n\right]$ with
$\partial\Omega_{\delta}$). Conditionally on $\gamma_{\delta}\left[0,n\right]$,
the configuration $\omega$ in $\Omega_{n,\delta}$ is an FK configuration
with boundary conditions that are free on the right of $\gamma_{\delta}$
(i.e. on the counterclockwise arc $\left[\gamma_{\delta}\left(n\right),O_{n,\delta}\right]$)
and wired elsewhere.

Analogously to the reasoning made in \cite{benoist-duminil-hongler},
we can use the crossing estimates of \cite{chelkak-duminil-hongler}
on the domains $\left(\Omega_{n,\delta}\right)$ to apply the technology
of \cite[Theorems 3.4 and 3.12]{kemppainen-smirnov} which allows
us to obtain that the law of the exploration $\gamma_{\delta}$ is
tight.

Let us now consider an almost sure scaling limit $\gamma$ of $\gamma_{\delta_{k}}$
for $\delta_{k}\to0$ (which is possible via the Skorokhod embedding
theorem). Consider a conformal mapping $\Omega\to\mathbb{H}$ with
$(a,b)\mapsto(0,\infty)$ and denote by $\lambda$ the image of $\gamma$.
Encode $\lambda$ by a Loewner chain, i.e. consider the family of
conformal mappings $g_{t}:H_{t}\to\mathbb{H}$, where $H_{t}$ is
the unbounded connected component of $\mathbb{H}\setminus\lambda\left[0,t\right]$
and we normalize $t$ and $g_{t}$ such that, for any time $t$, $g_{t}\left(z\right)=z+2t/z+o\left(1/z\right)$
as $z\to\infty$. Set $U_{t}:=g_{t}\left(\lambda\left(t\right)\right)$
and $O_{t}:=\sup\left(\lambda\left[0,t\right]\cap\mathbb{R}\right)$.
As usual, we have the Loewner equation $\partial_{t}g_{t}\left(z\right)=2/\left(g_{t}\left(z\right)-U_{t}\right)$
and as a result $U_{t}$ characterizes $\lambda$
and hence $\gamma$.

Let us now characterize the law of $U_{t}$. Set
\[
X_{t}:=\frac{O_{t}-U_{t}}{\sqrt{16/3}}.
\]
The following
two properties allow one to prove that $X_{t}$ is
a Bessel process of dimension $d=3/2$ (as in \cite{benoist-duminil-hongler}): 
\begin{itemize}
\item The process $X_{t}$ is instantaneously reflecting
off $0$, i.e the set of times $\left\{ t:X_{t}=0\right\} $ has zero
Lebesgue measure: this is a deterministic property for Loewner chains
(see e.g. \cite[Lemma 2.5]{miller-sheffield}). This imply that $X_{t}$
can be deterministically recovered from the ordered set of its excursions
away from $0$. 
\item When $X_{t}$ is away from $0$, the tip of the curve
$\gamma\left(t\right)$ is away from the boundary arc $\left[a,b\right]$.
This corresponds in the discrete to a time $t$ when the curve $\gamma_{\delta_{k}}\left(t\right)$
is away from the boundary arc $\left[a_{\delta_{k}},b_{\delta_{k}}\right]$,
and hence the domain yet to be explored $\Omega_{t,\delta_{k}}$ has
`macroscopic Dobrushin conditions' in the sense that both the wired
and free boundary arcs are macroscopic. By \cite[Theorem 2]{chelkak-duminil-hongler-kemppainen-smirnov},
the curve $\lambda$ behaves as chordal $\mathrm{SLE}\left(16/3\right)$
headed towards $O_{t}$ until the first hitting time of $[O_{t},+\infty)$,
which is the same, by SLE coordinate change \cite{schramm-wilson},
as an SLE$_{16/3}(-2/3)$ with force point at $O_{t}$ until the first
hitting time of $[O_{t},+\infty)$. Hence, the law of the ordered
set of excursions of $X_{t}$ away from $0$ is that
of the ordered set of excursions of a Bessel process of dimension
$d=3/2$ away from $0$. 
\end{itemize}
Let us now identify the process $O_{t}$. By integrating the Loewner
equation, we find that
\[
O_{t}=\int_0^t\frac{2}{\sqrt{16/3}\;X_{s}}ds+A_{t},
\]
where $A_{t}$ is constant when $X_{t}$ is away
from $0$. The following argument yields that $A_{t}$
is constant equal to zero: 
\begin{itemize}
\item The process $X_{t}$ is $\left(\frac{1}{2}\right)^{-}$-H\"older
continuous, as a Bessel process. 
\item Since $d=3/2>1$, we have that
\[
I_{t}=\int_{0}^{t}\frac{2}{\sqrt{16/3}\;X_{s}}ds
\]
is almost surely finite, and moreover $I_{t}$ is
$\left(\frac{1}{2}\right)^{-}$-H\"older continuous (as can be seen
from the fact that $X_{t}-I_{t}$ is a standard Brownian motion). 
\item By the tightness result of \cite{kemppainen-smirnov} (see \cite[Theorems 3.4 and 3.12]{kemppainen-smirnov}),
the driving function $U_{t}$ must be a.s. $\left(\frac{1}{2}\right)^{-}$-H\"older
continuous. 
\item As $A_{t}=\sqrt{16/3}\;X_{t}+U_{t}-I_{t}$, we see that $A_{t}$
must be $\left(\frac{1}{2}\right)^{-}$-H\"older continuous. 
\item Since $A_{t}$ can only vary on the set $\left\{ t:X_{t}=0\right\} $,
whose dimension is $\frac{2-d}{2}=\frac{1}{4}<\frac{1}{2}$, $A_{t}$
must be constant (as in \cite[Lemma 9]{benoist-duminil-hongler} or
\cite[Section 5.4]{kemppainen-smirnov-ii}). 
\end{itemize}
This characterizes the law of the pair $\left(X_{t},O_{t}\right)$,
and hence the law of the driving function $U_{t}$:
the curve $\lambda$ is an SLE$_{16/3}(-2/3)$ process, with force
point starting at $0^{+}$. 
\end{proof}

\subsection{Special points of FK interfaces}

The goal of Section \ref{sec:cfFKloops} will be to describe a conformally
invariant exploration procedure which for any $\varepsilon>0$, with
high probability as $\delta\to0$, gives $\varepsilon$-approximations
of all the level one FK loops of diameter larger than $\varepsilon$.

In order to do so, we need to control the formation of continuous
cut-out domains, i.e. we need to understand what happens on the lattice
level when the scaling limit of the interface has a double point or
touches the boundary. We only need to control what happens on the
right side of the curve $\gamma_{\delta}$, as the arguments needed
to control the left side are similar. Let us now introduce the following
subsets of $\gamma_{\delta}$ (resp. $\gamma$): 
\begin{itemize}
\item The set $\mathcal{P}_{B}$ of \emph{right boundary points} is the
set of points $x$ where the interface $\gamma_{\delta}$ comes as
close as possible to the counter-clockwise boundary arc from $a$
to $b$, i.e. a bi-medial mesh size $\delta/2$ away (resp. $\gamma$
touches the boundary arc $\left[a,b\right]$ in the continuous). 
\item The set $\mathcal{P}_{D}$ of \emph{clockwise double points} is the
set of points $x$ where $\gamma_{\delta}$ comes within distance
$\delta/2$ of a point $x'$ of its past (resp. $x$ is a double point
of $\gamma$), and such that the interface winds clockwise from $x'$
to $x$. 
\end{itemize}
We call points of $\mathcal{P}_{B}\cup\mathcal{P}_{D}$ \emph{special
points}. For a special point $x\in\mathcal{P}_{B}\cup\mathcal{P}_{D}$
of $\gamma_{\delta}$ (resp. $\gamma$), we define the subpath $K\left(x\right)$
of $\gamma_{\delta}$ (resp. of $\gamma$) as follows: 
\begin{itemize}
\item If $x\in\mathcal{P}_{B}$, $K(x)$ is the whole part of the curve
running from the origin $a$ to $x$ (excluding $a$ and $x$). 
\item If $x\in\mathcal{P}_{D}$, $K(x)$ is the loop the curve forms at
$x$ (excluding $x$), i.e. the part of the curve $\gamma_{\delta}$ (resp. $\gamma$)
running from $x'$ to $x$, where $x'$ is as above. Note that this is well-defined in the continuum, as the FK interface does not have triple points: this would indeed imply a six-arm event primal-dual-primal-dual-primal-dual that is ruled out by \cite[Remark 1.6]{chelkak-duminil-hongler}.
\end{itemize}
Note that for any special points $x,y\in\mathcal{P}_{B}\cup\mathcal{P}_{D}$,
if $K(x)\cap K(y)\neq\emptyset$, then either $K(x)\subseteq K(y)$
or $K(y)\subseteq K(x)$.

Given a finite family of points $\mathcal{P}\subset\mathcal{P}_{B}\cup\mathcal{P}_{D}$
containing the endpoint $b$, we define the partition $(P(x))_{x\in\mathcal{P}}$
of $\gamma_{\delta}\setminus\{a,b\}$ (resp. $\gamma\setminus\{a,b\}$) by: 
\[
P(x):=K(x)\setminus\bigcup_{y\in\mathcal{P}:K(y)\subset K(x)}K(y).
\]
Note that $P(x)$ is always a non-self-crossing loop (minus a point) or a non-self-crossing path joining two boundary points.
For $x\in\mathcal{P}$, we say that $P\left(x\right)$ is \emph{an
$\varepsilon$-approximation of a cut-out domain} if (at least) one of
the following holds: 
\begin{itemize}
\item The set $P(x)$ is of diameter less than $2\varepsilon$.
\item We can write the set $P(x)$ as
\[
P(x)=\left((\partial D \cap \Omega) \setminus {x}\right) \cup \bigcup_{{\gamma'} \in \mathcal{C}} {\gamma'},
\]
 where $D$ is a cut-out domain laying on the right side of $\gamma_{\delta}$ (resp. $\gamma$), and $\mathcal{C}$ is a collection of paths of diameter less than $\varepsilon$, with one endpoint on $\partial D \setminus {x}$
\end{itemize}
We say that a finite family of special points $\mathcal{P}\subset\mathcal{P}_{B}\cup\mathcal{P}_{D}$
is an \emph{$\varepsilon$-pinching family} if for each point $x\in\mathcal{P}$,
$P\left(x\right)$ is an $\varepsilon$-approximation of a cut-out domain.

The main statement of this subsection is that double points of the
scaling limit correspond to double points of the discrete interface
(and similarly for the boundary points). More precisely, we have the
following: 
\begin{prop}
\label{prop:fktouches} For all $\varepsilon,\varepsilon'>0$, there is
a mesh size $\delta_{0}>0$, such that for all $\delta<\delta_{0}$,
the following holds:

We can couple $\gamma_{\delta}$ and $\gamma$, find an $\varepsilon$-pinching
family $\mathcal{P}^{\varepsilon}$ of $\gamma$ and parametrize $\gamma_{\delta}:\left[0,1\right]\to\Omega_{\delta}$
and $\gamma:\left[0,1\right]\to\bar{\Omega}$ so that, with probability
at least $1-\varepsilon'$: 
\begin{enumerate}
\item For any special point $x\in\mathcal{P}^{\varepsilon}$, there is a special
point $x_{\delta}$ of $\gamma_{\delta}$ such that the subpaths $K(x_{\delta})$
and $K(x)$ are parametrized by the same time intervals. 
\item The parametrized curves $\gamma_{\delta}$ and $\gamma$ are $\varepsilon$-close
to each other in the topology of supremum norm. 
\end{enumerate}
\end{prop}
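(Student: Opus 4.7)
The plan is to pass to a Skorokhod coupling from Lemma \ref{lem:cvFKinterface}, choose $\mathcal{P}^\epsilon$ using the macroscopic excursions of the driving Bessel process, and then match each continuous special point of $\gamma$ to a discrete pinch of $\gamma_\delta$ via Proposition \ref{prop:fk-loops-scaling-limit}.

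First, I would work on a coupled probability space where $\gamma_\delta \to \gamma$ almost surely in sup norm up to reparametrization, and parametrize $\gamma$ by half-plane capacity in a conformal image to $\mathbb{H}$ sending $a,b$ to $0,\infty$. The proof of Lemma \ref{lem:cvFKinterface} identifies $X_t := (O_t-U_t)/\sqrt{\kappa}$ as a Bessel process of dimension $3/2$, and the special points of $\gamma$ correspond exactly to the zeros of $X_t$: at such a zero, either the tip hits the counter-clockwise arc $[a,b]$ (giving $\mathcal{P}_B$) or closes a clockwise loop on its right (giving $\mathcal{P}_D$). For a threshold $h>0$, the Bessel process has only finitely many excursions of height at least $h$ in a bounded time interval; letting $\tau_1<\cdots<\tau_N$ denote their right endpoints, set $\mathcal{P}^\epsilon := \{\gamma(\tau_i)\}_{i=1}^N \cup \{b\}$. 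Each piece $P(x)$ then consists either of sub-excursions of height $<h$ (hence of small diameter by SLE modulus of continuity) or of a single macroscopic excursion whose trace bounds a cut-out domain with thin attachments. For $h$ small enough depending on $\epsilon$, this produces an $\epsilon$-pinching family on an event of probability at least $1-\epsilon'/2$.

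Second, I would transfer each $x\in\mathcal{P}^\epsilon$ to a discrete special point $x_\delta$ of $\gamma_\delta$. If $x=\gamma(\tau_i)\in\mathcal{P}_B$, then $\gamma(\tau_i)\in\partial\Omega$; by sup-norm convergence, for $\delta$ small enough $\gamma_\delta(\tau_i)$ lies within bi-medial distance $\delta/2$ of $\partial\Omega_\delta^b$, yielding $x_\delta\in\mathcal{P}_B$. If $x\in\mathcal{P}_D$ closes a loop between times $\tau_i'<\tau_i$, then $K(x)$ encloses a continuous cut-out $C$ of positive area; by Proposition \ref{prop:fk-loops-scaling-limit}, $C$ is the scaling limit of a discrete cut-out $C_\delta$ of the coupled FK configuration, whose bi-medial boundary is a simple closed loop traced by $\gamma_\delta$. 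This loop necessarily closes by a discrete pinch---two bi-medial vertices of $\gamma_\delta$ at lattice distance $\delta/2$---which provides the required $x_\delta\in\mathcal{P}_D$. A final reparametrization of $\gamma_\delta$ on the finitely many intervals $[\tau_{i-1},\tau_i]$ makes each $K(x_\delta)$ traversed on exactly the time interval of $K(x)$, establishing (1); (2) survives because the reparametrization shifts time only slightly and the sup-norm closeness holds throughout.

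The main obstacle is extracting a discrete pinch at the bi-medial resolution $\delta/2$ from sup-norm curve convergence, whose approximation rate is a priori much coarser than $\delta$. The lever resolving this is Proposition \ref{prop:fk-loops-scaling-limit}: continuous cut-outs are limits of discrete cut-outs, and by lattice topology the boundary of a discrete cut-out on the right of the simple bi-medial path $\gamma_\delta$ is a simple closed loop that forces $\gamma_\delta$ to pinch at two bi-medial-adjacent vertices. This bridges the soft curve-level convergence to the hard lattice-level identification of special points.
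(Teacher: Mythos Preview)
Your proposal has a genuine circularity: you invoke Proposition~\ref{prop:fk-loops-scaling-limit} as the ``lever'' that produces discrete pinches from continuous cut-out domains, but in the paper Proposition~\ref{prop:fk-loops-scaling-limit} is \emph{derived from} Proposition~\ref{prop:fktouches}. Its first part is Proposition~\ref{prop:cvallFK}, whose proof explicitly uses Proposition~\ref{prop:fktouches} to control how the domains $D_\delta^j$ converge, and its second part is exactly Remark~\ref{rem:fktouches}, which is a corollary of the argument you are trying to write. So the bridge you rely on to go from a continuous double point to a discrete pinch at resolution $\delta/2$ is precisely the content of the proposition itself; invoking it assumes what is to be shown.

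Even setting the circularity aside, the boundary case does not go through as written. You claim that if $\gamma(\tau_i)\in\partial\Omega$ then sup-norm closeness forces $\gamma_\delta(\tau_i)$ within bi-medial distance $\delta/2$ of $\partial\Omega_\delta^b$; but as you yourself note, sup-norm convergence is at a scale much coarser than $\delta$, so this deduction is invalid. The same gap that you try to patch with Proposition~\ref{prop:fk-loops-scaling-limit} in the double-point case is present, unpatched, in the boundary case.

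The paper's argument is structured in the opposite direction and thereby avoids both problems. It constructs the $\epsilon/2$-pinching family $\mathcal{P}_\delta^\epsilon$ on the \emph{discrete} curve (where special points exist for free by lattice topology), passes to a subsequential limit $\mathcal{P}^\epsilon$, and then must only check that the limiting family is still $\epsilon$-pinching for $\gamma$. The key technical step is local: given that $\gamma_\delta$ comes $\eta$-close to pinching, one builds a quad $\mathcal{Q}_\delta$ of small extremal distance and uses the RSW estimates of \cite{chelkak-duminil-hongler} to force primal and dual FK crossings that compel an actual discrete pinch nearby, before $\gamma_\delta$ can escape. This, together with the absence of triple points (six-arm estimate), shows that each $P(x_i^j)$ remains an $\epsilon$-approximation of a cut-out domain in the limit. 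Your Bessel-excursion selection of $\mathcal{P}^\epsilon$ is a reasonable alternative for the continuous construction, but without an independent RSW-type argument you have no mechanism to produce the matching discrete special points.
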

\begin{proof}
By contradiction, assume there is $\varepsilon,\varepsilon'$ and a sequence of mesh size $\delta_n \to 0$ such that we cannot do the required construction for any of the values $\delta_n$. By Skorokhod embedding theorem, and the convergence in law of $\gamma_{\delta}\to\gamma$, we can construct a coupling of
$\gamma_{\delta_{n}}$ and $\gamma$ such that $\gamma_{\delta_{n}}\to\gamma$
almost surely. Let us construct discrete $\varepsilon/2$-pinching families
$\mathcal{P}_{\delta_n}^{\varepsilon}$, and
show that we can extract a subsequence $\left(\delta_{k}\right)\subset\left(\delta_{n}\right)$
such that $(\gamma_{\delta_{k}},\mathcal{P}_{\delta_{k}}^{\varepsilon})$
converges almost surely to $(\gamma,\mathcal{P}^{\varepsilon})$, where
$\mathcal{P}^{\varepsilon}$ is a finite $\varepsilon$-pinching family (and convergence of pinching points is in the sense of (1) in the statement of the theorem).
This will provide a contradiction, and hence imply the claim.

\begin{figure}
\centering \includegraphics[width=12cm]{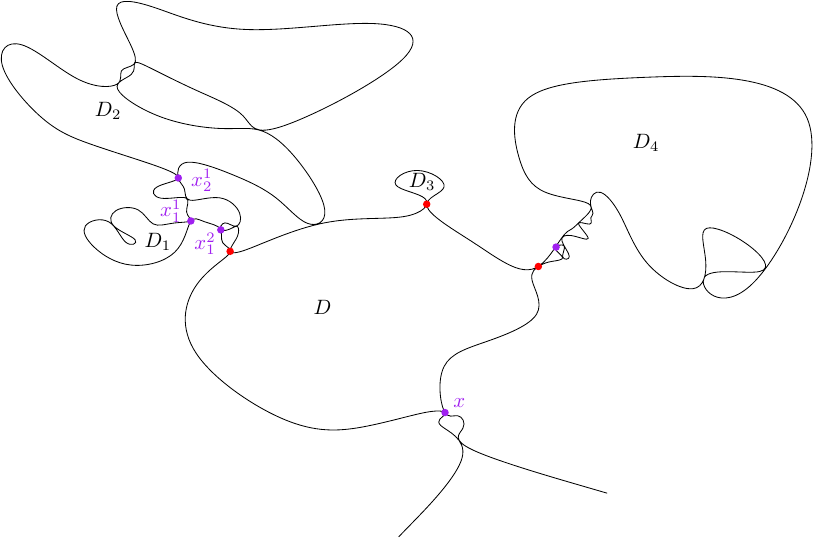}
\caption{A cut-out domain $D$ of diameter larger than $\varepsilon$ (the scale is given in orange). It is bounded
by pieces of $\gamma$, and clockwise double points, shown in red.
Some of these double points disconnect pieces of diameter less than
$2\varepsilon$ (the domain $D_{3}$ is one of these). The other (macroscopic)
double points are controlled on the discrete level by purple pinching
points (disconnecting $D_{1}$, $D_{2}$ and $D_{4}$). Note that
in this picture the point $x_{2}^{2}$ is equal to $x_{1}^{2}$.}
\label{fig:FKpinching} 
\end{figure}

Let us now consider the finite family $\left(w_{k}\left(\delta\right)=\gamma_{\delta}\left(s_{k}\right)\right)_{k=0,\ldots,n}$
of points on $\gamma_{\delta}$ in chronological order, with 
\begin{itemize}
\item $w_{0}\left(\delta\right)=a$ and $w_{n}\left(\delta\right)=b$ 
\item for any $0<i<n$, $s_{i}$ is the first time after $s_{i-1}$ when
$\gamma_{\delta}$ exits the ball of radius $\varepsilon/4$ around $w_{i-1}\left(\delta\right)$, 
\item $\mathrm{diam}\left(\gamma_{\delta}\left[s_{n-1},s_{n}\right]\right)\leq\varepsilon/4$. 
\end{itemize}
We then define a finite $\varepsilon/2$-pinching family $\mathcal{P}_{\delta}^{\varepsilon}$
of points $x_{i}^{j}\left(\delta\right)=\gamma_{\delta}\left(t_{i}^{j}\left(\delta\right)\right)$
(that may not be distinct) in the following way: 
\begin{itemize}
\item Let $x_{i}^{0}\left(\delta\right)$ be the first point of $\mathcal{P}_{B}\cup\mathcal{P}_{D}$
coming after $w_{i}\left(\delta\right)$ such that $K\left(x_{i}^{0}\left(\delta\right)\right)$
contains $w_{i}\left(\delta\right)$ and is of diameter at least $\varepsilon/2$. 
\item For $j\geq0$, we define $x_{i}^{j+1}\left(\delta\right)$ as the
first point after $x_{i}^{j}\left(\delta\right)$ such that $K(x_{i}^{j}\left(\delta\right))\subset K(x_{i}^{j+1}\left(\delta\right))$
and such that the diameter of $K(x_{i}^{j+1}\left(\delta\right))\setminus K(x_{i}^{j}\left(\delta\right))$
is at least $\varepsilon/2$. 
\end{itemize}
Let us explain why $\mathcal{P}_{\delta}^{\varepsilon}$ is an $\varepsilon/2$-pinching
family (see Figure \ref{fig:FKpinching}). 
\begin{itemize}
\item Suppose $y$ is a special point of $\gamma_{\delta}$ such that ${\rm diam}K(y)\geq\varepsilon/2$.
Then $K(y)$ contains at least one point $w_{i}(\delta)$. Let $j$
be the highest index such that $x_{i}^{j}(\delta)\in K(y)$. Then
we have that either $y=x_{i}^{j}(\delta)$, or that $K(y)\setminus K(x_{i}^{j}(\delta))$
is of diameter less than $\varepsilon/2$. In particular, for any special
point $y$, we have that either $y\in\mathcal{P}_{\delta}^{\varepsilon}$
or that the diameter of 
\[
K(y)\setminus\bigcup_{x_{i}^{j}\left(\delta\right)\in\mathcal{P}_{\delta}^{\varepsilon}:K(x_{i}^{j}\left(\delta\right))\subset K(y)}K(x_{i}^{j}\left(\delta\right))
\]
is less than $\varepsilon/2$. 
\item Every special point $x$ of $\gamma_{\delta}$ is the closing point
of a (possibly degenerate, i.e. of diameter $\delta/2$) cut-out domain
$D_{x}$. Moreover, the set 
\[
K(x)\setminus\bigcup_{y\in\mathcal{P}_{B}\cup\mathcal{P}_{D}:K(y)\subset K(x)}K(y)
\]
corresponds to the parts of $\gamma_{\delta}$ that trace the boundary
of the cut-out domain $D_{x}$. 
\item By the two previous items, for any $x_{i}^{j}\left(\delta\right)\in\mathcal{P}_{\delta}^{\varepsilon}$,
we have that 
\[
P(x_{i}^{j}\left(\delta\right))=D_{x_{i}^{j}(\delta)}\bigcup\left(\bigcup_{y\in\partial D_{x_{i}^{j}(\delta)}\cap(\mathcal{P}_{B}\cup\mathcal{P}_{D}\setminus\mathcal{P}_{\delta}^{\varepsilon})}\left(K(y)\setminus\bigcup_{x_{k}^{l}(\delta)\in\mathcal{P}_{\delta}^{\varepsilon}\cap K(y)}K(x_{k}^{l}(\delta))\right)\right)
\]
can be obtained by attaching to $D_{x_{i}^{j}\left(\delta\right)}$
paths of diameter less than $\varepsilon/2$ and hence is an $\varepsilon/2$-approximation
of a cut-out domain. 
\end{itemize}
Note that the number of points in $\mathcal{P}_{\delta}^{\varepsilon}$
is tight, as the almost sure limit $\gamma$ of the $\gamma_{\delta_{n}}$
is a continuous curve. By compactness, we can assume that the family
$\mathcal{P}_{\delta}^{\varepsilon}$ converges to a finite family $\mathcal{P}^{\varepsilon}$
of points $x_{i}^{j}=\gamma(t_{i}^{j})$ on the curve $\gamma$ (at
least by taking a subsequence $\gamma_{\delta_{k}}$) in the following
sense: we can parametrize $\gamma_{\delta_{k}}$ and $\gamma$ such
that for these parametrizations $\|\gamma_{\delta_{k}}-\gamma\|_{\infty}\to0$
almost surely, such that the limit $t_{i}^{j}:=\lim t_{i}^{j}\left(\delta_{k}\right)$
exists for all $i,j$, and such that for any $x\in\mathcal{P}^{\varepsilon}$,
there exists a point $x_{\delta}\in\mathcal{P}_{\delta}^{\varepsilon}$
such that the subpaths $K(x_{\delta})$ and $K(x)$ are parametrized
by the same time intervals. The claim of the proposition hence reduces
to proving that the family $\mathcal{P}^{\varepsilon}$ is an $\varepsilon$-pinching
family for $\gamma$, namely that for any $x_{i}^{j}=\gamma\left(t_{i}^{j}\right)\in\mathcal{P}^{\varepsilon}$,
the set $P\left(x_{i}^{j}\right)$ is an $\varepsilon$-approximation
of a cut-out domain. In the following, we will assume that $x_{i}^{j}\in\mathcal{P}_{D}$.
The case $x_{i}^{j}\in\mathcal{P}_{B}$ follows from similar arguments
and is simpler.

To prove this, let us first introduce approximations $x_{i}^{j}\left(\delta,\eta\right)=\gamma_{\delta}\left(t_{i}^{j}\left(\delta,\eta\right)\right)$
of the $\varepsilon$-pinching points $x_{i}^{j}(\delta)$ for $\eta>0$.
The point $x_{i}^{j}\left(\delta,\eta\right)$ is defined to be the
first point after $x_{i}^{j-1}\left(\delta\right)$ (or after $w_{i}\left(\delta\right)$
if $j=0$) to be\emph{ $\eta$-close to being a macroscopic pinching
point}, i.e. such that if $\gamma_{\delta}$ were to continue along
a (hypothetical) path $\tilde{\gamma}$ of length at most $\eta$,
the pinching point $x_{i}^{j}\left(\delta\right)$ would be located
at the end of $\tilde{\gamma}$.

Consider the family of points $y_{i}^{j}$ located at times $u_{i}^{j}:=\lim_{\eta_{m}\to0}\lim_{\delta_{k}\to0}t_{i}^{j}\left(\delta_{k},\eta_{m}\right)$
(where the double limit $\delta_{k}\ll\eta_{m}\ll1$ is taken through
a diagonal extraction if necessary). It remains to show that with
high probability, each point $y_{i}^{j}=\gamma\left(u_{i}^{j}\right)$
appears chronologically right before the point $x_{i}^{j}$, in the
sense that $\mathrm{diam}\left(K(x_{i}^{j})\setminus K(y_{i}^{j})\right)\leq\varepsilon/2$.
This will readily imply that $P\left(x_{i}^{j}\right)$ is an $\varepsilon$-approximation
of a cut-out domain, thus concluding the proof of the proposition.

\begin{figure}
\centering \includegraphics[width=12cm]{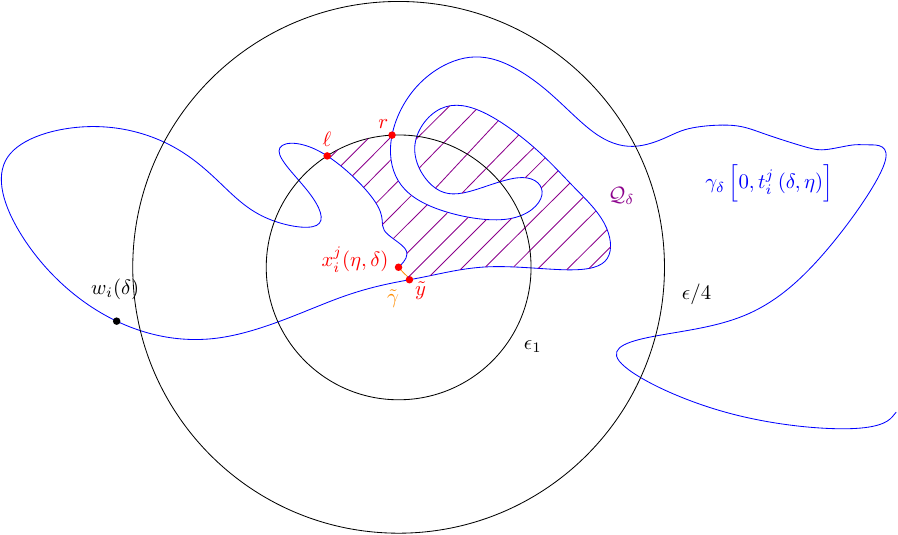} \caption{The quad $\mathcal{Q}_{\delta}$ in striped purple. Primal and dual
FK crossings in this quad will force the appearance of the pinching
point $x_{i}^{j}(\delta)$ before $\gamma_{\delta}$ touches the arc
$[r,\ell]$.}
\label{fig:quad} 
\end{figure}

Let us now fix a point $y_{i}^{j}$. We want to find a quad (i.e.
a topological rectangle) $\mathcal{Q}_{\delta}$ that contains (with
high probability) FK crossings ensuring that the set $K(x_{i}^{j})\setminus K(y_{i}^{j})$
is small. For $\alpha>0$, let us define the ball $B_{\alpha}:=\left\{ z\in\Omega_{\delta}:\left|z-x_{i}^{j}\left(\delta,\eta\right)\right|<\alpha\right\} $
of radius $\alpha$ around $x_{i}^{j}\left(\delta,\eta\right)$, and
let $T_{\alpha}$ be the set of times when $\gamma_{\delta}$ visits
this ball: 
\[
T_{\alpha}\left(\delta,\eta\right):=\left\{ t\in\left[0,t_{i}^{j}\left(\delta,\eta\right)\right]:\gamma_{\delta}(t)\in B_{\alpha}\right\} .
\]
The curve $\gamma$ does not have triple points, as this would produce
a six-arm event prevented by \cite[Remark 1.6]{chelkak-duminil-hongler}.
In particular $y_{i}^{j}$ is not a triple point for $\gamma$. As
a result, we can pick $\varepsilon_{1}>0$ small enough such that for
all $\delta\ll\eta\ll\varepsilon_{1}$, we have that $T_{\varepsilon_{1}}\left(\delta,\eta\right)$
is included in two connected components of $T_{\varepsilon/4}$ with
high probability: one of the connected components containing $t_{i}^{j}\left(\delta,\eta\right)$,
and the other one containing the end $\tilde{y}$ of the (hypothetical)
curve $\tilde{\gamma}$ considered above (see Figure \ref{fig:quad}).

Let us now define the quad $\mathcal{Q}_{\delta}$, with boundary
marked points $r,\ell,x_{i}^{j}\left(\delta,\eta\right)$ and $\tilde{y}$
(in counterclockwise order): 
\begin{itemize}
\item the segment $\left[r,\ell\right]$ is the connected component of $\partial B_{\varepsilon_{1}}\setminus\gamma_{\delta}\left[0,t_{i}^{j}\left(\delta,\eta\right)\right]\cap\partial B_{\varepsilon_{1}}$
that disconnects $x_{i}^{j}\left(\delta,\eta\right)$ from the endpoint
$b$ of $\gamma_{\delta}$ in the domain $\Omega_{\delta}\setminus\gamma_{\delta}\left[0,t_{i}^{j}\left(\delta,\eta\right)\right]$; 
\item the segments $\left[\ell,x_{i}^{j}\left(\delta,\eta\right)\right]$
and $\left[\tilde{y},r\right]$ follow $\gamma_{\delta}$; 
\item the segment $\left[x_{i}^{j}\left(\delta,\eta\right),\tilde{y}\right]$
is simply $\tilde{\gamma}$. 
\end{itemize}
By choosing $\eta$ small enough (and $\delta\ll\eta$), we can make
the extremal distance (see \cite{chelkak-duminil-hongler} for a definition)
between the arcs $\left[\ell,x_{i}^{j}\left(\delta,\eta\right)\right]$
and $\left[\tilde{y},r\right]$ in $\mathcal{Q}_{\delta}$ arbitrarily
small. By the RSW estimate of \cite{chelkak-duminil-hongler}, we
can ensure that with arbitrarily high probability (for any $\delta$
small enough), there is a dual FK crossing separating $\left[x_{i}^{j}\left(\delta,\eta\right),\tilde{y}\right]$
from $\left[r,\ell\right]$ and furthermore a primal FK crossing separating
the dual crossing from $\left[r,\ell\right]$. As a result, there
is a point $z_{\ell}\in\left[\ell,x_{i}^{j}\left(\delta,\eta\right)\right]$
and a point $z_{r}\in\left[\tilde{y},r\right]$ between which $\gamma_{\delta}$
has to travel while staying within $\mathcal{Q}_{\delta}$. In particular,
$z_{r}$ is a special point of $\gamma_{\delta}$ such that $K(x_{i}^{j-1}(\delta))\subset K(z_{r})$
(resp. such that $w_{i}(\delta)\in K(z_{r})$ if $j=0$) and such
that $K(z_{r})\setminus K(x_{i}^{j-1}(\delta))$ encloses $\tilde{\gamma}$
and hence is of diameter larger than $\varepsilon/2$. This ensures that
the point $x_{i}^{j}(\delta)$ is found before the point $z_{r}$,
in particular before $\gamma_{\delta}$ crosses the arc $[r,\ell]$.
As $y_{i}^{j}$ is not a triple point for the curve $\gamma$, and
as the quad $\mathcal{Q}_{\delta}$ is of diameter less than $\varepsilon/2$,
we see that with high probability, $\gamma\left[t_{i}^{j}(\eta),t_{i}^{j}\right]$
is of diameter less than $\varepsilon/2$ and hence with high probability
the set $K(x_{i}^{j})\setminus K(y_{i}^{j})$ is of diameter less
than $\varepsilon/2$.

By taking the successive limits $\delta\ll\eta\ll\varepsilon_{1}\ll\varepsilon$,
we obtain that all $P\left(x_{i}^{j}\right)$ are $\varepsilon$-approximations
of a cut-out domain, and hence that $\mathcal{P}^{\varepsilon}$ is an
$\varepsilon$-pinching family, thus proving the proposition. 
\end{proof}

\subsection{Convergence of FK loops}

\label{sec:cfFKloops}

In this subsection, we identify the scaling limit of the outermost
FK loops by a recursive exploration procedure (see Figure \ref{fig:FKloops}),
analogous to the exploration procedure introduced in \cite{camia-newman-ii}. 
\begin{prop}
\label{prop:cvallFK} The set of all level 1 FK loops in the discrete
approximation $\Omega_{\delta}$ of a Jordan domain $\Omega$ with wired boundary conditions converges
to a conformally invariant scaling limit. \end{prop}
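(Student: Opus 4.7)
The plan is to realize the scaling limit of all outermost FK loops through an iterative exploration procedure, in the spirit of Camia--Newman \cite{camia-newman-ii}, transferring convergence from a single FK exploration (Lemma \ref{lem:cvFKinterface}) to the full collection via the special-point correspondence of Proposition \ref{prop:fktouches}. I first run an FK exploration $\gamma^{(1)}_\delta$ between boundary points $a^{(1)}_\delta \to a^{(1)}$ and $b^{(1)}_\delta \to b^{(1)}$ in $\Omega_\delta$. By Lemma \ref{lem:cvFKinterface} and Skorokhod's representation theorem, I couple $\gamma^{(1)}_\delta$ with its SLE$_{16/3}(-2/3)$ limit $\gamma^{(1)}$ so that the convergence is almost sure. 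For any $\epsilon>0$, Proposition \ref{prop:fktouches} produces an $\epsilon$-pinching family that puts the macroscopic cut-out domains on the right side of $\gamma^{(1)}_\delta$ in correspondence with those of $\gamma^{(1)}$. Since the boundary of each right-side cut-out domain is an outermost FK loop, this yields a finite collection of outermost FK loops of diameter at least $\epsilon$ converging in $d_\mathcal{X}$.

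The next step is to iterate in each connected component of $\Omega_\delta \setminus \gamma^{(1)}_\delta$ that is not itself a right-side cut-out. By the spatial Markov property of FK, each such component inherits wired boundary conditions (its boundary is composed of wired pieces of $\partial\Omega_\delta$ together with primal-side pieces of $\gamma^{(1)}_\delta$, all of which lie in the boundary primal cluster), so I can apply Lemma \ref{lem:cvFKinterface} and Proposition \ref{prop:fktouches} again to a fresh exploration within each such component. Iterating $K$ times while combining the Skorokhod couplings produces a cumulative collection of revealed loops that converges in $d_\mathcal{X}$ to a well-defined limit collection. For termination at scale $\epsilon$, I invoke the RSW estimates of \cite{chelkak-duminil-hongler}: any outermost FK loop of diameter at least $\epsilon$ that has not yet been revealed sits in an unexplored wired component of diameter at least $\epsilon$ and has, uniformly in $\delta$, positive probability of being intercepted by the next exploration in that component. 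Hence for $K = K(\epsilon,\Omega)$ large enough, all outermost loops of diameter at least $\epsilon$ are revealed with probability at least $1-\epsilon$.

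Conformal invariance of the limit collection is inherited from the conformal covariance of SLE$_{16/3}(-2/3)$, since each individual exploration depends only on the conformal type of the current wired subdomain and its two marked points, and the iteration commutes with conformal maps of $\Omega$. The main obstacle I anticipate is the choice of marked boundary points at each iteration: they must be chosen measurably with respect to the previously revealed data, must be compatible with the convergence hypothesis of Lemma \ref{lem:cvFKinterface} so that $a^{(k)}_\delta \to a^{(k)}$ in the appropriate sense on possibly irregular subdomain boundaries, and must ensure macroscopic progress at each step for the RSW termination argument to apply uniformly in $\delta$. A more subtle point is to verify that the limiting loop collection is intrinsic, i.e.\ independent of the exploration scheme, which requires an argument in the spirit of Camia--Newman showing that two different schemes produce the same limit almost surely.
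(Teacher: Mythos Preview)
Your overall strategy (an iterated Camia--Newman type exploration driven by Lemma~\ref{lem:cvFKinterface} and Proposition~\ref{prop:fktouches}) is the same as the paper's, but your decomposition of $\Omega_\delta\setminus\gamma^{(1)}_\delta$ contains a genuine gap.

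After the first exploration, the complementary components come in \emph{three} types, not two: left-side components (wired), right-side components that do \emph{not} touch $\partial\Omega_\delta$ (free), and right-side components that \emph{do} touch $\partial\Omega_\delta$ (mixed free/wired). Your dichotomy ``right-side cut-outs, whose boundaries are outermost FK loops'' versus ``remaining components, which are wired'' misses this last class. A right-side component bordering $\partial\Omega_\delta$ carries free conditions along the $\gamma^{(1)}_\delta$ arc and wired conditions along the $\partial\Omega_\delta$ arc; its boundary is not an FK loop, and it is not wired, so neither branch of your scheme applies to it. The paper deals with each such mixed domain by running a \emph{secondary} Dobrushin interface $\lambda_\delta^j$ separating the wired and free arcs; it is the concatenation of $\lambda_\delta^j$ with the relevant arc of $\gamma^{(1)}_\delta$ that produces an outermost FK loop, and the convergence of $\lambda_\delta^j$ to chordal $\mathrm{SLE}_{16/3}$ (via \cite{chelkak-duminil-hongler-kemppainen-smirnov}) is what identifies its scaling limit. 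Without this step you never recover the FK loops that $\gamma^{(1)}_\delta$ only partially traces.

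A second, smaller issue: even for the right-side components that do not touch $\partial\Omega_\delta$, Proposition~\ref{prop:fktouches} hands you \emph{cut-out domains} of FK loops, i.e.\ connected components of the interior of a loop, not the loops themselves. Since $\kappa=16/3>4$, an outermost FK loop generically has several cut-out domains separated by its own double points, so identifying each cut-out boundary with an FK loop is not correct without an additional gluing argument. In the paper's scheme this problem does not arise because the loop $\ell_\delta^j$ is obtained in one piece as $\lambda_\delta^j$ concatenated with an arc of $\gamma_\delta$, and Proposition~\ref{prop:fktouches} is used only to show that the discrete subdomains converge to the continuum ones.
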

\begin{proof}
Let us fix $\varepsilon>0$ and define an exploration procedure (see
\cite{camia-newman-ii} for a similar construction) that will discover
all the level one FK loops of diameter at least $\varepsilon$. Note that, in order to see that the $d_\mathcal{X}$ distance is smaller than $\varepsilon$, only the loops of diameter at least $\varepsilon$ matter.

\begin{figure}
\centering \includegraphics[width=12cm]{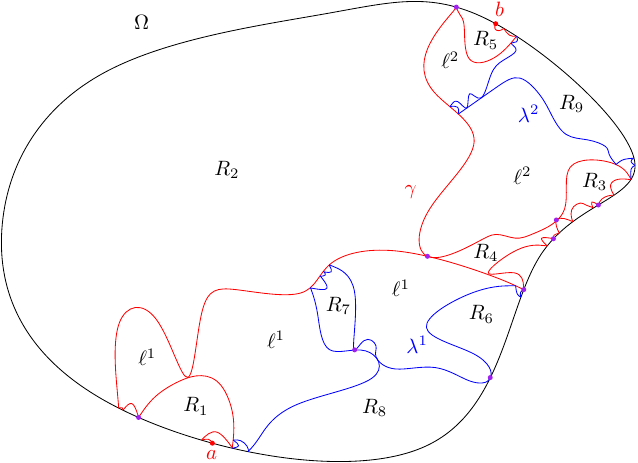} \caption{In red, the first exploration path. It cuts out $k=2$ domains with
mixed boundary conditions and large diameter, as well as five domains
$R_{1},\ldots,R_{5}$ with wired boundary conditions and large diameter.
The mixed boundary conditions domain are further cut by two blue explorations
$\lambda^{1}$ and $\lambda^{2}$. These cut out four more large domains
with wired boundary conditions, and allows us to recover two FK loops:
the three domains $R_{6},R_{7},R_{8}$ and the loop $\ell_{1}$ for
$\lambda^{1}$; and the domain $R_{9}$ and the loop $\ell_{2}$ for
$\lambda^{2}$. The purple dots are special discrete points with very
high probability (as they are $\varepsilon$-pinching points).}
\label{fig:FKloops} 
\end{figure}

Let us choose boundary bi-medial points $a_{\delta},b_{\delta}\in\partial\Omega_{\delta}^{b}$
converging to points $a,b\in\partial\Omega$, and consider the exploration
path $\gamma_{\delta}$ from $a_{\delta}$ to $b_{\delta}$, as in
Lemma \ref{lem:cvFKinterface}. Let us condition on $\gamma_{\delta}$
and consider the connected components of $\Omega_{\delta}\setminus\gamma_{\delta}$: 
\begin{itemize}
\item The connected components on the left side of $\gamma_{\delta}$ have
wired boundary conditions. 
\item The components on the right side of $\gamma_{\delta}$ that touch
the boundary of $\Omega_{\delta}$ have mixed boundary conditions. 
\item All other components stay on the right side of $\gamma_{\delta}$
and have free boundary conditions. 
\end{itemize}
Let us consider the \emph{macroscopic domains cut-out by} $\gamma_{\delta}$,
i.e., the connected components of $\Omega_{\delta}\setminus\gamma_{\delta}$
of diameter at least $\varepsilon$. As $\delta\to0$, the interface $\gamma_{\delta}$ converges to a continuous curve, namely an $\mathrm{SLE}_{16/3}\left(-2/3\right)$, (Lemma \ref{lem:cvFKinterface}) and as all double points of this limit correspond to discrete double points of $\gamma_{\delta}$ (Proposition \ref{prop:fktouches}), we see that, uniformly in the mesh size $\delta$, with high probability there are at most $N$ macroscopic domains cut-out by $\gamma_{\delta}$, for $N$ large enough.

In each of these macroscopic
domains $D_{\delta}^{j}$ (for $j=1,\ldots,k\leq N$) that have mixed
boundary conditions, we consider the FK interface $\lambda_{\delta}^{j}$
that separates the wired and the free boundary arcs. We obtain $k$
FK loops $\ell_{\delta}^{j}$ by concatenating each interface $\lambda_{\delta}^{j}$
with the arc of $\gamma_{\delta}$ joining its endpoints. Moreover,
each of the $\lambda_{\delta}^{j}$ cuts the mixed domain into a collection
of domains with free boundary conditions (these are the cut-out domains
of $\ell_{\delta}^{j}$) and domains with wired boundary conditions.

The interface $\gamma_{\delta}$ converges to $\mathrm{SLE}_{16/3}\left(-2/3\right)$
as $\delta\to0$ (Lemma \ref{lem:cvFKinterface}). As we control special
points of $\gamma_{\delta}$ (Proposition \ref{prop:fktouches}), the
domains $D_{\delta}^{j}$ converge to continuous connected components
$D^{j}$ of $\Omega\gamma$ (in the sense that their boundaries converge as curves for the supremum norm up to reparametrization). Furthermore, for each $j$, the interface
$\lambda_{\delta}^{j}$ converges to an $\mathrm{SLE}_{16/3}$ curve
in $D^{j}$ as $\delta\to0$ (\cite[Theorem 2]{chelkak-duminil-hongler-kemppainen-smirnov}).
With high probability, the complement of the interior of the loop
$\ell^{j}$ in the domain $D^{j}$ has less than $M$ connected component
(carrying wired boundary conditions) of diameter larger than $\varepsilon$.

We have hence explored a batch of (at most $N$) level $1$ FK loops
and with high probability (with $N$ fixed and $\delta\to0$), the
region outside of these loops contains at most $N+NM$ wired domains
of diameter larger than $\varepsilon$.

\begin{figure}
\includegraphics{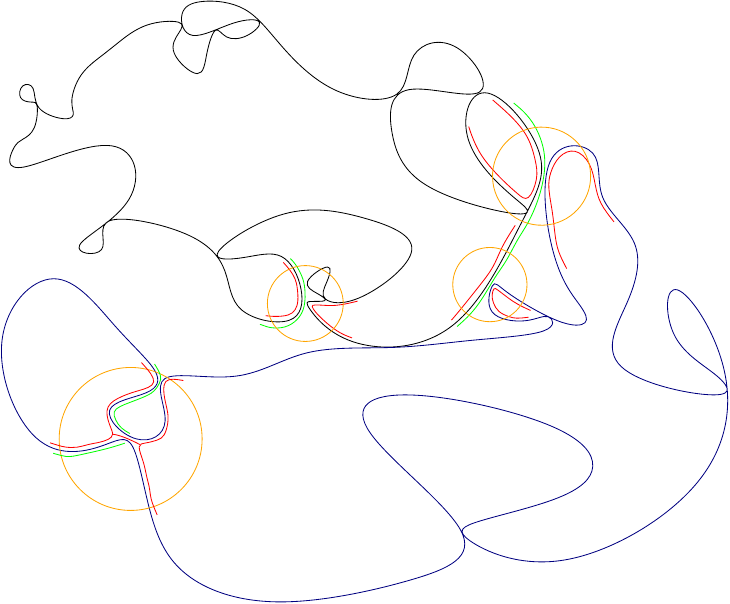} \caption{\label{fig:fk-six-arm}Double-points and contact points of FK cut-out
domains cannot happen in the scaling limit, as they correspond to
six-arm events. Two FK loops, in black and blue. Part of the primal
configuration is drawn in green, part of the dual configuration in
red. Six-arm events are marked by orange circles at four different
locations. From left to right: cases (B), (A), (C), (C) of the proof
of Proposition \ref{prop:cut-out_domains}.}
\end{figure}

Each of these domains can be further explored by iterating the exploration
we used for $\Omega$: starting with an interface between two far
away points on the boundary of these new domains, and starting secondary
explorations in all the resulting mixed domains of diameter larger
than $\varepsilon$.

Each step of the exploration scheme reduces the maximum diameter of
domains in the collection of wired domains yet to be explored (which
are connected components of the set of points laying outside all FK
loops currently discovered), and so, by choosing a number of step
$n$ large enough, we can ensure, with arbitrarily high probability,
that after $n$ iterations of this scheme, we are left with only domains
of diameter less than $\varepsilon$.

Note that when this is the case, any level 1 FK loop that has not
been found needs to stay in one of the small wired domains cut out,
and so is of diameter less than $\varepsilon$. \end{proof}
\begin{rem}
\label{rem:fktouches} Note that the argument of Proposition \ref{prop:fktouches}
tells us that all double points, contact points or boundary touching
points of the scaling limits of FK loops are limits of discrete double
points, contact points, and boundary touching points. 
\end{rem}

\subsection{The boundary of cut-out domains are disjoint simple curves}

We conclude this appendix by a qualitative property of continuous
FK loops.
\begin{prop}
\label{prop:cut-out_domains} The boundary of continuous cut-out domains
are disjoint simple curves. \end{prop}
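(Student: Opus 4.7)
The plan is to reduce both parts of the statement — each boundary being a simple curve, and boundaries of distinct cut-out domains being disjoint — to the non-occurrence of macroscopic six-arm events in critical FK-Ising, via the correspondence between continuous and discrete special points recorded in Remark \ref{rem:fktouches}.

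First, I would observe that a cut-out domain is by definition a connected component of the interior of an outermost continuous FK loop, so its boundary is a piece of that loop. Hence the proposition is equivalent to the assertion that continuous outermost FK loops have no self-intersections, no pairwise contact points, and no boundary contact points. By Remark \ref{rem:fktouches} (itself a consequence of Proposition \ref{prop:fktouches} upon sending $\epsilon\to0$), any such putative macroscopic special point of the scaling limit must be a limit of discrete double, contact, or boundary touching points of the corresponding discrete FK loops.

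Next, I would go through the four possible obstructions (a loop self-touching, two loops touching internally, a loop touching $\partial\Omega$, or two loops meeting at the boundary) and verify that each one, on the discrete level, forces a six-arm event at a macroscopic scale: in each case, the prescribed topology of the touching configuration produces six macroscopic interfaces emanating from the special point, alternately tracing primal FK clusters and dual FK clusters — this is precisely what is depicted in Figure \ref{fig:fk-six-arm}. By \cite[Remark 1.6]{chelkak-duminil-hongler}, the probability of such a six-arm event at a given macroscopic scale $\epsilon$ inside $\Omega_\delta$ tends to $0$ as $\delta\to0$. A union bound over an $\epsilon/2$-net of $\Omega$, together with the passage to the limit $\delta\to0$ followed by $\epsilon\to0$, rules out simultaneously all the special points listed above and concludes the proof.

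The main obstacle is the combinatorial bookkeeping that turns each of the four putative touching configurations in the continuum into a genuine six-arm configuration in the discrete model — one must check that the alternating primal/dual arm structure is forced and that six (not fewer) macroscopic arms appear. Once this is in place, the six-arm estimate does all the work and the qualitative statement about cut-out domains follows.
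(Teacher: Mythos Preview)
Your reduction in the second paragraph is incorrect, and this is a genuine gap. You write that the proposition ``is equivalent to the assertion that continuous outermost FK loops have no self-intersections, no pairwise contact points, and no boundary contact points.'' But continuous outermost FK loops are $\mathrm{SLE}_{16/3}$-type curves and are \emph{not} simple: they have plenty of double points (indeed, the whole machinery of Proposition~\ref{prop:fktouches} is devoted to tracking these double points), and they do touch $\partial\Omega$ (this is precisely what makes them explorable from the boundary). A cut-out domain is a connected component of the interior of such a loop, and its boundary is only a sub-piece of the loop; the double points of the loop that separate one cut-out domain from another are perfectly legitimate and correspond, at the discrete level, to ordinary four-arm events (primal--dual--primal--dual), not six-arm events. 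Your argument would attempt to rule out all self-intersections of the FK loop, which is both false and not what is being claimed.

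The paper's proof makes exactly this distinction: it first observes that ``internal'' double points (those that would disconnect the interior of a cut-out domain) are excluded by construction, and then argues that an ``external'' double point of a cut-out boundary --- i.e.\ a point where the boundary of a single cut-out domain pinches itself from the outside --- forces a six-arm event with the specific cyclic pattern dual--dual--primal--dual--dual--primal (case~(A) of Figure~\ref{fig:fk-six-arm}); likewise, a contact point between the boundaries of two distinct cut-out domains forces the same six-arm pattern (cases~(B) and~(C)). The key combinatorial content is precisely identifying which touching configurations give six arms rather than four, and your proposal skips over this by lumping all self-touchings together. Once you make this distinction correctly, the six-arm estimate from \cite{chelkak-duminil-hongler} finishes the job, as you say.
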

\begin{proof}
By construction, the cut-out domains do not have `internal' double
points, i.e. double points that would disconnect their interior. Now,
the presence of an `external' double point (i.e. a point that would
disconnect the interior of their complement) would imply the presence
of a six-arm event (dual-dual-primal-dual-dual-primal, in cyclic order)
for the FK model as in Figure \ref{fig:fk-six-arm}, case (A). In
the scaling limit, this is ruled out by \cite[Theorem 1.5]{chelkak-duminil-hongler}
(using the same argument as in Remark 1.6 there). Moreover, the boundaries
of macroscopic cut-out domains do not touch each other by a similar
argument. If there were a point of intersection, this would again
imply a six-arm event (dual-dual-primal-dual-dual-primal, in cyclic
order), which is again ruled out: see Figure \ref{fig:fk-six-arm}
for the two sub-cases (B) and (C) of this case. \end{proof}

\end{document}